\newtheorem{theo}{Theorem}[section]
\newtheorem{lemm}[theo]{Lemma}
\numberwithin{equation}{section}
\begin{document}

\title{Analysis of time-domain scattering by periodic structures}

\author{Yixian Gao}
\address{School of Mathematics and Statistics, Center for Mathematics
and Interdisciplinary Sciences, Northeast Normal University, Changchun,
Jilin 130024, P.R.China}
\email{gaoyx643@nenu.edu.cn}

\author{Peijun Li}
\address{Department of Mathematics, Purdue University, West Lafayette, IN 47907,
USA.}
\email{lipeijun@math.purdue.edu}

\thanks{The research of YG was partially supported by NSFC grant 11571065 and
Jilin Science and Technology Development Project. The research of PL was
supported in part by the NSF grant DMS-1151308.}

\keywords{Time-domain Maxwell's equations, diffraction gratings, well-posedness
and stability, a priori estimates}

\begin{abstract}
This paper is devoted to the mathematical analysis of a time-domain
electromagnetic scattering by periodic structures which are known as
diffraction gratings. The scattering problem is reduced
equivalently into an initial-boundary value problem in a bounded domain by using
an exact transparent boundary condition. The well-posedness and stability of the
solution are established for the reduced problem. Moreover, a priori energy
estimates are obtained with minimum regularity requirement for the data and
explicit dependence on the time. 
\end{abstract}
\maketitle

\section{Introduction}

This paper is concerned with the mathematical analysis of an electromagnetic
scattering problem in periodic structures, where the wave propagation is
governed by the time-domain Maxwell equations. The scattering theory in periodic
diffractive structures, also known as diffraction gratings, has applications in
many cutting-edge scientific areas including ultra-fast and high-energy
lasers, space flight instruments, astronomy, and synchrotron spectrometers. A
good introduction can be found in \cite{Petit2013} to diffraction grating
problems and various numerical approaches. The book \cite{Friedman1990} contains
descriptions of several mathematical problems that arise in diffractive optics
modeling in industry. Some more recent developments are addressed
in \cite{BaoCowsar2001} on theory, analysis, and computational techniques of
diffractive optics. 

The time-harmonic grating problems have been extensively studied by
many researchers via either the integral equation methods or the
variational methods \cite{Chen1991, Dobson1992, Bao1995Finite, Bao1996Numerical,
Bao1997}. A survey may be found in \cite{BaoandDobson1995} for mathematical
studies in rigorous grating theory. The general result may be stated as follows:
The diffraction problem has a unique solution for all but a countable sequence
of singular frequencies. Unique solvability for all frequencies can be obtained
for gratings which have absorbing media or perfectly electrically conducting
surfaces with Lipschitz profiles. Numerical methods are developed for both the
two-dimensional Helmholtz equation (one-dimensional gratings) and the
three-dimensional Maxwell equations (crossed or two-dimensional gratings)
\cite{BaoYang2000, BaoChenWu2005, ChenWu2003Adaptive, BaoLiWu2010, WangWu2015}.

The time-domain scattering problems have attracted considerable attention due to
their capability of capturing wide-band signals and modeling more general
material and nonlinearity \cite{ChenMonk2014, Jin2009, JiHuang2013,
Riley2008, WangWang2012}. Comparing with the time-harmonic problems, the
time-domain problems are much less studied due to the additional challenge of
the temporal dependence. Rigorous mathematical analysis is very rare. The
analysis can be found in \cite{WangWang2014, Chen2008Maxwell} for the
time-domain acoustic and electromagnetic obstacle scattering problems. We refer
to \cite{LiWangWood2015} for the analysis of the time-dependent electromagnetic
scattering from a three-dimensional open cavity. Numerical solutions can be
found in \cite{Li2015, Veysoglu1993} for the time-dependent wave scattering by
periodic structures/surfaces. The theoretical analysis is still lacking for the
time-domain scattering by periodic structures. 

The goal of this work is to analyze mathematically the time-domain scattering
problem which arises from the electromagnetic wave propagation in a periodic
structure. Specifically, we consider an electromagnetic plane wave which is
incident on a one-dimensional grating in $\mathbb{R}^3$. So the structure is
assumed to be invariant in the $y$-direction and periodic in the $x$-direction.
The three-dimensional Maxwell equations can be decomposed into two fundamental
polarizations: transverse electric (TE) polarization and transverse magnetic
(TM) polarization, where Maxwell's equations are reduced to the two-dimensional
wave equation. We shall study the wave equation in two dimensions for both
polarizations. The structure can also be characterized by the medium parameters:
the electric permittivity and the magnetic permeability. They are periodic in
$x$ and assumed only to be bounded measurable functions. Hence our method works
for very general gratings whose surfaces/interfaces are allowed to be
Lipschitz profiles or even graphs of some Lipschitz continuous functions. 

There are two challenges of the problem: time dependence and unbounded
domain. In the frequency domain, various approaches have been developed to
truncate unbounded domains into bounded ones, such as absorbing boundary
conditions (ABCs), transparent boundary conditions (TBCs), and perfectly matched
layer (PML) techniques. These effective boundary conditions are being extended
to handle time-domain problems \cite{Alpert2002, Chen2009Convergence, Grote1995,
Hagstrom1999}. Utilizing the Laplace transform as a bridge between the
time-domain and the frequency domain, we develop an exact time-domain TBC and
reduce the problem equivalently into an initial boundary value problem in a
bounded domain. Using the energy method with new energy functions, we show the
well-posedness and stability of the time-dependent problem. The
proofs are based on examining the well-posedness of the time-harmonic Helmholtz
equations with complex wavenumbers and applying the abstract inversion theorem
of the Laplace transform. Moreover, a priori estimates, featuring an explicit
dependence on time and a minimum regularity requirement of the data, are
established for the wave field by studying directly the time-domain wave
equation.

The paper is organized as follows.  In section \ref{FRP}, we introduce the
model problem and develop a TBC to reduce it into an initial boundary value
problem. Section \ref{SRP} is devoted to the analysis of the reduced problem,
where the well-posdeness and stability are addressed and a priori estimates are
provided. We conclude the paper with some remarks and directions for future work
in section \ref{CL}.

\section{Problem formulation}\label{FRP}

In this section, we introduce the mathematical model of interest and develop an
exact TBC to reduce the scattering problem from an unbounded domain into a
bounded domain. 

\subsection {A model problem}

Consider the  system of time-domain Maxwell equations in $\mathbb{R}^3$ for
$t>0$:
\begin{equation}\label{TMW}
\begin{cases}
\nabla \times \boldsymbol {E}(x, y, z, t) +\mu \partial_t \boldsymbol {H}(x, y,
z, t)=0,\\
\nabla \times \boldsymbol {H} (x, y,  z, t)-\varepsilon \partial_t \boldsymbol
{E}(x, y, z, t)=0,
 \end{cases}
\end{equation}
where $\boldsymbol E$ is the electric field, $\boldsymbol H$ is the magnetic
field, $\varepsilon$ and $\mu$ are the dielectric permittivity and magnetic
permeability, respectively, and satisfy 
\[
0 < \varepsilon_{\rm min} \leq \varepsilon \leq \varepsilon_{\rm max}<
\infty,\quad 0 < \mu_{\rm min} \leq \mu \leq \mu_{\rm max} < \infty.
\]
Here $\varepsilon_{\rm min}, \varepsilon_{\rm max}, \mu_{\rm min}, \mu_{\rm
max}$ are constants. We assume that the structure is invariant in the
$y$-direction and thus focus on the one-dimensional grating. The more
complicated problem in biperiodic structures will be considered in a separate
work. There are two fundamental polarizations for the one-dimensional structure:

(i) TE polarization.  The electric and magnetic fields are 
\[
 \boldsymbol E(x, y, z, t)=[0, E(x,  z, t),0 ]^{\top},\quad \boldsymbol H(x,  z,
t) =[H_1(x,  z, t), 0, H_3(x,  z, t)]^{\top}. 
\]
Eliminating the magnetic field from (\ref{TMW}), we get the wave equation for
the electric field:
\begin{equation}\label{TE}
\varepsilon \partial_t^2 E(x, z, t)= \nabla \cdot (\mu^{-1} \nabla E(x, z, t)).
\end{equation}

(ii) TM polarization.  The electric and magnetic fields are
\[
 \boldsymbol E(x, y, z, t)=[E_1(x, z, t), 0, E_3(x, z,
t)]^{\top},\quad \boldsymbol H(x, y, z, t)= [0, H(x, z, t), 0]^{\top}. 
\]
We may eliminate the electric field from \eqref{TMW} and obtain the wave
equation for the magnetic field:
\begin{equation}\label{TM}
\mu \partial_{t}^2 H(x, z, t)= \nabla \cdot (\varepsilon^{-1} \nabla H(x, z,
t)).
\end{equation}
It is clear to note from \eqref{TE} and \eqref{TM} that the TE and TM
polarizations can be handled in a unified way by formally exchanging the roles
of $\varepsilon$ and $\mu$. We will just present the results by using \eqref{TE}
as the model equation in the rest of the paper. 

Now let us specify the problem geometry, which is shown in Figure \ref{pg}.
Since the structure and medium are assumed to be periodic in the $x$ direction,
there exists a period $\Lambda>0$ such that 
\[
\varepsilon (x+ n\Lambda, z) =\varepsilon(x, z),\quad \mu(x+n\Lambda, z)= \mu(x,
z),\quad\forall (x, z) \in \mathbb R^2, n\in\mathbb{Z}.
\]
We assume that $\varepsilon$ and $\mu$ are constants away from the region
$\Omega=\{(x, z): 0 \leq x \leq \Lambda,\, h_2 \leq  z \leq h_1\}$, where $h_j$
are constants. Denote $\Omega_1:=\{(x, z): 0 \leq x \leq \Lambda,\, z >
h_1\}$ and $\Omega_2:=\{(x, z): 0 \leq x \leq \Lambda,\, z < h_2\}$. There exist
constants $\varepsilon_j$ and $\mu_j$ such that
\[
\varepsilon (x, z)= \varepsilon_j,\quad \mu(x, z) =\mu_j\quad  \text{in}~
\Omega_j.
\]
Throughout we also assume that $\varepsilon \mu \geq \varepsilon_1 \mu_1$,
which is usually satisfied since $\varepsilon_1$ and $\mu_1$ are the electric
permittivity and magnetic permeability in the free space $\Omega_1$. Finally we
define $\Gamma_1=\{(x, z): 0 \leq x \leq \Lambda,\, z=h_1\}$ and $\Gamma_2:
=\{(x, z): 0 \leq x \leq \Lambda,\, z=h_2\}$. 

\begin{figure}
\centering
\includegraphics[width=0.3\textwidth]{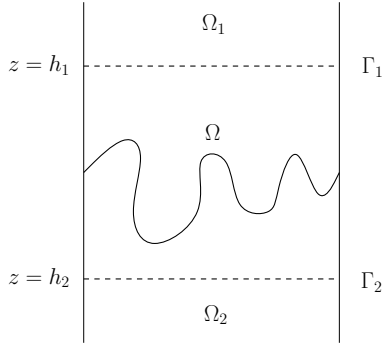}
\caption{Problem geometry of the time-domain scattering by a periodic
structure}
\label{pg}
\end{figure}

Consider an incoming plane wave $E^{\rm inc}$ which is incident on the structure
from above. Explicitly we have
\[
 E^{\rm inc} (x, z, t)= f(t- c_1 x - c_2 z),
\]
where $f$ is a smooth function and its regularity will be specified
later, and $c_1=\cos\theta/c, c_2=\sin\theta/c$. Here
$\theta$, satisfying $0<\theta < \pi$, is the incident angle,
and $c=1/\sqrt{\varepsilon_1\mu_1}>0$ is the light speed in the free
space. Clearly,  the incident field $E^{\rm inc} (x, z, t)$ satisfies
the wave equation (\ref{TE}) when $\varepsilon=\varepsilon_1, \mu=\mu_1$. 

Although the incident field $E^{\rm inc}$ may not be a periodic function in the
$x$-direction, we can verify that 
\[
 E^{\rm inc}(x+\Lambda, z, t) =E^{\rm inc} (x, z, t-c_1 \Lambda),\quad\forall
(x, z)\in\mathbb{R}^2, ~ t>0.
\]
Motivated by the uniqueness of the solution, we assume that the  total field
satisfies the same translation property, i.e., 
 \[
 E(x+\Lambda, z, t)=E(x, z, t- c_1\Lambda),\quad  (x, z)\in \mathbb R^2, ~ t>0.
 \]
We define
\begin{equation}\label{CV}
U(x, z, t)= E(x, z, t+c_1 (x-\Lambda)),\quad U^{\rm inc}(x, z, t)= E^{\rm
inc}(x, z, t+c_1(x- \Lambda)).
\end{equation}
It follows from (\ref{CV}) that we get
\[
U(x+\Lambda, z, t)=E(x+\Lambda, z, t+c_1 x)=E(x, z, t+c_1
x-c_1 \Lambda)=U(x, z, t),
\]
which shows that $U$ is a periodic function in the $x$-direction with period
$\Lambda$. Similarly, we can verify that the incident field $U^{\rm inc}$ is a
trivially periodic function of $x$ (independent of $x$) since  
\[
U^{\rm inc} (x, z, t)= E^{\rm inc} (x, z, t+c_1(x- \Lambda))=f(t-c_2
z-c_1 \Lambda).
\]

Using the change of variables, we have
\[
\partial_t E= \partial_t U,  \quad \partial_x E =\partial_x U
-c_1\partial_t U.
\]
The equation (\ref{TE}) becomes
\begin{equation}\label{TEU}
(\varepsilon -c_1^2 \mu^{-1})\partial_t^2 U=\nabla \cdot
(\mu^{-1} \nabla U) -c_1(\mu^{-1} \partial_{tx} U
+\partial_x(\mu^{-1} \partial_t U)).
\end{equation}
A simple calculation yields that 
\begin{align*}
 \varepsilon -c_1^2
\mu^{-1}&=(\varepsilon\mu-\varepsilon_1\mu_1\cos^2\theta)\mu^{-1}\geq
\varepsilon_1\mu_1(1-\cos^2\theta)\mu^{-1}\\
&=\varepsilon_1\mu_1\mu^{-1}\sin^2\theta>0,\quad\forall~\theta\in(0,\,\pi),
\end{align*}
which shows that the equation \eqref{TEU} is a well-defined wave equation. 

It is easy to verify that the incident field $U^{\rm inc}$ satisfies \eqref{TEU}
with $\varepsilon= \varepsilon_1, \mu=\mu_1$. To impose the initial conditions,
we assume that the total field and the incident field vanish for $t<0$ so that
the incident field $U^{\rm inc} =0$ and the scattered field $V =U-U^{\rm inc
}=0$ for $t<0$. The initial conditions are 
\begin{equation}\label{IC}
U|_{t=0}= \partial_t U|_{t=0} =0. 
\end{equation}
In addition $U$ is $\Lambda$-periodic in the $x$-direction. This paper aims to
study the well-posedness and stability of the scattering problem
\eqref{TEU}--\eqref{IC}. 

We introduce some notation. For any $s=s_1+{\rm i}s_2$ with $s_1,
s_2\in\mathbb{R}, s_1>0$, define by $\breve{u}(s)$ the Laplace transform of the
function $u(t)$, i.e., 
\[
 \breve{u}(s)=\mathscr{L}(u)(s)=\int_0^\infty e^{-st}u(t){\rm d}t.
\]
Define a weighted periodic function space
\[
H_{s, \rm p}^1(\Omega)=\{ u \in H^1(\Omega): u(0, z)=u(\Lambda, z)\},
\]
which is Sobolev space with the norm characterized by 
\[
\|u\|^2_{H_{s, \rm p}^1(\Omega)}=\int_\Omega \bigl( |\nabla
u|^2+|s|^2|u|^2\bigr) {\rm d} x {\rm d}z.
\]
Given $u \in H_{s, \rm p}^1(\Omega),$ it has a Fourier expansion with respect to
$x$:
\[
 u(x, z)=\sum_{n\in\mathbb{Z}}u_n(z) e^{{\rm i}\alpha_n x},\quad
\alpha_n=2n\pi\Lambda^{-1}. 
\]
A simple calculation yields an equivalent norm in $H_{s, \rm p}^1(\Omega)$
via Fourier coefficients:
\begin{equation}\label{HPN}
\|u\|^2_{H^1_{s, \rm p}(\Omega)}=\sum\limits_{n \in \mathbb Z}
\bigl(|s|^2+\alpha_n^2 \bigr) \int_{h_2}^{h_1} |u_n(z)|^2 {\rm d}z
+\sum\limits_{n \in \mathbb Z}\int_{h_2}^{h_1} |u_n'(z)|^2 {\rm d}z.
\end{equation}
For a periodic function $u$ defined on $\Gamma_j$ with Fourier coefficients
$u_n$, we define a weighted trace functional space
\begin{equation}\label{HTN}
H_s^\lambda (\Gamma_j)=\{ u \in L^2(\Gamma_j):
\|u\|^2_{H^\lambda(\Gamma_j)}=\sum\limits_{n \in \mathbb
Z}\bigl(|s|^2+\alpha_n^2\bigr)^\lambda |u_n|^2 < \infty\},
\end{equation}
where $\lambda\in\mathbb{R}$. It is clear to note that the dual space of
$H^{1/2}_s(\Gamma_j)$ is $H^{-1/2}_s(\Gamma_j)$ under the $L^2(\Gamma_j)$ inner
product
\[
\langle u, v \rangle_{\Gamma_j}=\int_{\Gamma_j} u \bar{v} {\rm d} \gamma_j.
\]

The weighted Sobolev spaces $H^1_{s, \rm p}(\Omega)$ and $H^\nu_s(\Gamma_j)$
are equivalent to the standard Sobolev spaces $H^1_{\rm p}(\Omega)$ and
$H^\lambda(\Gamma_j)$ since $|s|\neq 0$. Hereafter, the expression $` a \lesssim
b"$ stands for $``a \leq  C b "$, where $C$ is a positive constant and its
specific value is not required but should be always clear from the context.

\subsection{Transparent boundary condition}

We introduce a TBC to reformulate the scattering problem into an equivalent
initial-boundary value problem in a bounded domain. The idea is to design a
Dirichlet-to-Neumann (DtN) operator which maps the Dirichlet data to the
Neumann data of the wave field.

Subtracting the incident field $U^{\rm inc}$ from the total field
$U$ in \eqref{TEU} and \eqref{IC}, we obtain the equation for the scattered
field 
\begin{equation}\label{SE}
(\varepsilon_1 -c_1^2\mu_1^{-1})\partial_t^2 V=\nabla \cdot
(\mu^{-1}_1 \nabla V)-c_1(\mu^{-1}_1 \partial_{t x} V+\partial_x
(\mu^{-1}_1 \partial_t V)) \quad \text{in}~\Omega_1, ~ t>0,
\end{equation}
and the initial conditions
\begin{equation}\label{ICV}
 V|_{t=0}=\partial_t V|_{t=0}=0 \quad\text{in}~ \Omega_1.
\end{equation}

Let $\breve V(x, z, s) =\mathscr L (V)$  be the Laplace transforms of $V (x, z,
t)$ with respect to $t$. Recall that
\begin{align*}
\mathscr L (\partial_t V )&= s \breve V(x, z, s) -V(x, z, 0),\\
\mathscr L (\partial_t^2 V )&= s^2 \breve V(x, z, s) -s V(x, z, 0)
-\partial_t V(x, z, 0).
\end{align*}
Taking the Laplace transform of $(\ref{SE})$ and using the initial conditions
(\ref{ICV}), we have 
\[
(\varepsilon_1 -c_1^2\mu_1^{-1}) s^2  \breve {V}=\nabla \cdot(\mu
^{-1}_1 \nabla \breve{V})-c_1(\mu^{-1}_1 s\partial_x \breve V+ s
\partial_x(\mu^{-1}_1 \breve{V} ) ),
\]
which reduces to 
 \begin{equation}\label{LSE1}
 (\varepsilon_1 \mu_1 -c_1^2) s^2 \breve {V}= \Delta \breve
{V} - 2c_1 s \partial_x \breve V \quad \text{in}~\Omega_1.
 \end{equation}
Since $\breve V$ is a periodic function in $x$, it has the Fourier expansion
\[
\breve{V} (x, z) =\sum \limits_{n \in \mathbb Z} \breve {V}_n (z) e^{{\rm i}
\alpha_n x}, \quad z> h_1.
\]
Substituting the Fourier expansion of $\breve{V}$ into (\ref{LSE1}), we obtain
an ordinary differential equation for the Fourier coefficients:
\[
 \begin{cases}
  \partial^2_z \breve V_n (z)- (\beta_1^{(n)})^2 \breve V_n (z)=0,\quad z>h_1,\\
 \breve V_{n} (z) =  \breve V_{n} (h_1)
 \end{cases}
\]
where 
\[
 \beta_1^{(n)}=(\varepsilon_1 \mu_1 s^2 + (\alpha_n +{\rm i}c_1
s)^2)^{1/2},\quad {\rm Re}\beta_1^{(n)}<0. 
\]
Using the outgoing radiation condition, we have
\[
\breve V_n (z)=\breve V_n (h_1) e^{\beta_1^{(n)}(z-h_1)},
\]
Thus we get the Rayleigh expansion for the scattered field in $\Omega_1$:
\[
\breve V  (x, z)= \sum\limits_{n \in \mathbb Z} \breve V_{n}(h_1) e^{{\rm i}
\alpha_n x } e^{\beta_1^{(n)}(z-h_1)}. 
\]
Taking the normal derivative of the above equation on $\Gamma_1$ yields
\[
\partial_{\nu_1} \breve V (x, h_1)=\sum \limits_{n\in \mathbb Z}
\beta_1^{(n)} \breve V_n (h_1) e^{{\rm i} \alpha_n x},
\]
where $\nu_1=[0,\,1]^\top$ is the unit normal vector on $\Gamma_1$. 

Similarly, we can obtain the Rayleigh expansion for the total field in
$\Omega_2$: 
\[
\breve U (x, z)= \sum\limits_{n \in \mathbb Z} \breve U_{n}(h_2) e^{{\rm i}
\alpha_n x } e^{-\beta_2^{(n)}(z-h_2)},
\]
where 
\[
 \beta_2^{(n)}=(\varepsilon_2 \mu_2 s^2 +(\alpha_n +{\rm i}c_1 s)^2)^{1/2},\quad
{\rm Re}\beta_2^{(n)}<0. 
\]
Taking the normal derivative of $\breve{U}$ on $\Gamma_2$ gives 
\[
\partial_{\nu_2} \breve U(x, h_2)=\sum \limits_{n\in
\mathbb Z}\beta_2^{(n)} \breve U_n(h_2) e^{{\rm i} \alpha_n x},
\]
where $\nu_2=[0,\,-1]^\top$ is the normal vector on $\Gamma_2$. For any function
$u (x, h_j)$ defined on $\Gamma_j$, we define the DtN operators 
\begin{equation}\label{DTM}
 (\mathscr B_j u)(x, h_j)= \sum \limits_{n \in \mathbb Z}\beta_j^{(n)}
u_n (h_j) e^{{\rm i} \alpha_n x},\quad u(x, h_j)=\sum_{n\in\mathbb
Z}u_n(h_j)e^{{\rm i}\alpha_n x}. 
\end{equation}

\begin{lemm}\label{TT}
There exists a positive constant $C_1$ such that
\[
\|u\|_{H_s^{1/2}(\Gamma_j)} \leq C_1 \|u\|_{H_{s, \rm
p}^{1}(\Omega)},\quad\forall ~ u \in H_{s, \rm p}^1(\Omega). 
\]
\end{lemm}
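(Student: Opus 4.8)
The plan is to diagonalize both norms in the Fourier basis $\{e^{{\rm i}\alpha_n x}\}_{n\in\mathbb Z}$ and thereby reduce the asserted bound to a family of one–dimensional trace estimates on the interval $(h_2,h_1)$, one for each mode. Writing $u(x,z)=\sum_n u_n(z)e^{{\rm i}\alpha_n x}$, the trace $u(\cdot,h_j)$ on $\Gamma_j$ has Fourier coefficients $u_n(h_j)$, so by the equivalent norms \eqref{HPN} and \eqref{HTN} it is enough to find a constant $C$, independent of $n$, such that
\[
(|s|^2+\alpha_n^2)^{1/2}|u_n(h_j)|^2\le C\Big[(|s|^2+\alpha_n^2)\int_{h_2}^{h_1}|u_n(z)|^2\,{\rm d}z+\int_{h_2}^{h_1}|u_n'(z)|^2\,{\rm d}z\Big],
\]
after which summing over $n\in\mathbb Z$ yields the claim. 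This is the natural route because the $H^1_{s,\rm p}(\Omega)$ and $H^{1/2}_s(\Gamma_j)$ norms are both given as weighted $\ell^2$ sums of modal quantities.

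First I would fix $n$ and set $g=u_n$, $\gamma_n=(|s|^2+\alpha_n^2)^{1/2}$, and $L=h_1-h_2$. The fundamental theorem of calculus gives $|g(h_j)|^2=|g(z)|^2+\int_z^{h_j}\frac{\rm d}{{\rm d}t}|g|^2\,{\rm d}t$ for every $z\in(h_2,h_1)$; averaging this identity over $z$ removes the unknown interior value, and bounding $\frac{\rm d}{{\rm d}t}|g|^2\le 2|g||g'|$ followed by Cauchy--Schwarz leads to
\[
|g(h_j)|^2\le \frac1L\int_{h_2}^{h_1}|g|^2\,{\rm d}z+2\Big(\int_{h_2}^{h_1}|g|^2\,{\rm d}z\Big)^{1/2}\Big(\int_{h_2}^{h_1}|g'|^2\,{\rm d}z\Big)^{1/2}.
\]
Applying Young's inequality to the cross term with the weight $\gamma_n$, namely $2ab\le\gamma_n a^2+\gamma_n^{-1}b^2$, and then multiplying through by $\gamma_n$ produces $\gamma_n|g(h_j)|^2\le(\gamma_n/L+\gamma_n^2)\|g\|_{L^2}^2+\|g'\|_{L^2}^2$, which already has the right shape apart from the stray factor $\gamma_n/L$.

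The hard part will be absorbing this lower–order term $\gamma_n/L$ into the coefficient $\gamma_n^2$ uniformly in $n$. Here I would use positivity of $|s|$: since $\gamma_n\ge|s|>0$ we have $\gamma_n/L\le(L|s|)^{-1}\gamma_n^2$, so the bracketed coefficient is bounded by $(1+(L|s|)^{-1})\gamma_n^2$ and the per–mode estimate holds with $C=1+(L|s|)^{-1}$, independent of $n$. Summing over $n$ and invoking \eqref{HPN} and \eqref{HTN} then gives the lemma with $C_1^2=1+(L|s|)^{-1}$. I expect this absorption to be the only genuine obstacle, and it is exactly where the standing hypothesis $|s|\neq 0$ (noted just before the statement) is used; testing the $n=0$ mode against a constant function shows the ratio equals $(L|s|)^{-1}$, so some dependence of the constant on a lower bound for $|s|$ is in fact unavoidable, consistent with $C_1$ being a fixed positive constant for each fixed $s$.
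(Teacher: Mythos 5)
Your proof is correct and follows essentially the same route as the paper's: reduce to a per-mode one-dimensional trace inequality via the Fourier expansion, use the fundamental-theorem-of-calculus identity averaged over $(h_2,h_1)$, apply Cauchy--Schwarz/Young with the weight $(|s|^2+\alpha_n^2)^{1/2}$, and absorb the lower-order term using the positivity of $s$. The only (cosmetic) difference is in the absorption step: the paper splits $(|s|^2+\alpha_n^2)^{1/2}\le|s|+|\alpha_n|$ and bounds $|s|\le s_1^{-1}|s|^2$ and $|\alpha_n|\le(2\pi)^{-1}\Lambda\alpha_n^2$ separately, yielding a constant depending on $s_1$ and $\Lambda$, whereas you use $(|s|^2+\alpha_n^2)^{1/2}\le|s|^{-1}(|s|^2+\alpha_n^2)$ directly, yielding the slightly cleaner constant $1+((h_1-h_2)|s|)^{-1}$.
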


\begin{proof}
First we have
\begin{align*}
(h_1 -h_2)|\zeta (h_j)|^2
&=\int_{h_2}^{h_1}|\zeta (z)|^2 {\rm d}z + \int_{h_2}^{h_1} \int_{z}^{h_j}
\frac{\rm d}{{\rm d} t} |\zeta (t)|^2 {\rm d}t {\rm d}z\\
&\leq \int_{h_2}^{h_1} |\zeta (z)|^2 {\rm d}z+(h_1 -h_2) \int_{h_2}^{h_1} 2
|\zeta (z)| |\zeta '(z)| {\rm d}z,
\end{align*}
which gives
\begin{align*}
\bigl(|s|^2+\alpha_n^2\bigr)^{1/2} |\zeta (h_j)|^2
\leq & (h_1-h_2)^{-1}\bigl(|s|^2+\alpha_n^2\bigr)^{1/2}
\int_{h_2}^{h_1} |\zeta (z)|^2 {\rm d}z\\
&+\int_{h_2}^{h_1} 2\bigl(|s|^2+\alpha_n^2\bigr)^{1/2} |\zeta (z)|
|\zeta '(z)| {\rm d}z. 
\end{align*}
It follows from the Cauchy--Schwarz inequality that
\begin{align*}
\bigl(|s|^2+\alpha_n^2\bigr)^{1/2} |\zeta (h_j)|^2
\leq & (h_1-h_2)^{-1}\bigl(|s| +|\alpha_n|\bigr)\int_{h_2}^{h_1}
|\zeta (z)|^2 {\rm d}z\\
&+\bigl(|s|^2+\alpha_n^2\bigr) \int_{h_2}^{h_1} |\zeta (z)|^2 {\rm
d}z+\int_{h_2}^{h_1} |\zeta '(z)|^2 {\rm d}z.
\end{align*}
Using the fact that $s=s_1 +{\rm i } s_2$ with $s_1>0$, we have 
\[
|s|\leq  s_1^{-1}|s|^2
,\quad |\alpha_n|\leq  (2\pi)^{-1}\Lambda\alpha_n^2 .
\]
Letting
\[
C_1^2=\max\{1+(h_1-h_2)^{-1}s_1^{-1},~
1+(2\pi)^{-1}(h_1-h_2)^{-1}\Lambda\},
\]
we can show that
\[
\bigl(|s|^2+\alpha_n^2\bigr)^{1/2} |\zeta (h_j)|^2 \leq
C_1^2\Bigl( \bigl(|s|^2+\alpha_n^2\bigr)
\int_{h_2}^{h_1} |\zeta (z)|^2 {\rm d}z+\int_{h_2}^{h_1} |\zeta '(z)|^2 {\rm d}
z\Bigr).
\]
The proof is completed by combing the above estimates and the definition
(\ref{HPN}).
\end{proof}

\begin{lemm} \label{DTN}
The DtN operator $\mathscr B_j: H_{s, \rm p}^{1/2}(\Gamma_j) \to
H_{s, \rm p}^{-1/2}(\Gamma_j)$ is continuous, i.e.,
\[
\|\mathscr B_j u\|_{H_{s, \rm p}^{-1/2}(\Gamma_j)}\leq C_2
\|u\|_{H_{s, \rm p}^{1/2}(\Gamma_j)},
\]
where $C_2>0$ is a constant. 
\end{lemm}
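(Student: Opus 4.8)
The plan is to exploit the fact that the operator $\mathscr B_j$ and the two trace norms are all diagonal in the Fourier basis $\{e^{\mathrm i\alpha_n x}\}$, so that the claimed continuity collapses to a single estimate on the symbol $\beta_j^{(n)}$. From the definition \eqref{DTM} the $n$-th Fourier coefficient of $\mathscr B_j u$ is just $\beta_j^{(n)}u_n$, while \eqref{HTN} gives the two norms as weighted $\ell^2$ sums of the coefficients $u_n$. Concretely,
\[
\|\mathscr B_j u\|_{H^{-1/2}_{s,\rm p}(\Gamma_j)}^2=\sum_{n\in\mathbb Z}\bigl(|s|^2+\alpha_n^2\bigr)^{-1/2}\,|\beta_j^{(n)}|^2\,|u_n|^2,\qquad
\|u\|_{H^{1/2}_{s,\rm p}(\Gamma_j)}^2=\sum_{n\in\mathbb Z}\bigl(|s|^2+\alpha_n^2\bigr)^{1/2}\,|u_n|^2.
\]
Comparing these sums term by term, it therefore suffices to exhibit a constant $C_2$, independent of $n$, for which the per-mode bound $|\beta_j^{(n)}|^2\le C_2^2\bigl(|s|^2+\alpha_n^2\bigr)$ holds.

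To produce this symbol estimate I would first square $\beta_j^{(n)}$ and expand, obtaining
\[
\bigl(\beta_j^{(n)}\bigr)^2=\varepsilon_j\mu_j s^2+(\alpha_n+\mathrm ic_1 s)^2=(\varepsilon_j\mu_j-c_1^2)s^2+\alpha_n^2+2\mathrm ic_1 s\alpha_n.
\]
The key point that makes the argument painless is that the branch fixed by ${\rm Re}\,\beta_j^{(n)}<0$ is irrelevant to the estimate, since $|\beta_j^{(n)}|^2=\bigl|(\beta_j^{(n)})^2\bigr|$ for any complex number; one never has to analyze the square root itself. Applying the triangle inequality and then Young's inequality $2c_1|s|\,|\alpha_n|\le c_1(|s|^2+\alpha_n^2)$ yields
\[
|\beta_j^{(n)}|^2\le(\varepsilon_j\mu_j-c_1^2)|s|^2+\alpha_n^2+c_1\bigl(|s|^2+\alpha_n^2\bigr)\le\max\bigl\{\varepsilon_j\mu_j-c_1^2+c_1,\;1+c_1\bigr\}\bigl(|s|^2+\alpha_n^2\bigr).
\]
Here the standing hypothesis $\varepsilon\mu\ge\varepsilon_1\mu_1$ combined with $c_1^2=\varepsilon_1\mu_1\cos^2\theta$ guarantees $\varepsilon_j\mu_j-c_1^2>0$, so the bracketed quantity is positive and finite; taking $C_2$ to be its square root delivers the per-mode bound.

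Finally I would substitute the per-mode bound into the first display: the factor $\bigl(|s|^2+\alpha_n^2\bigr)^{-1/2}|\beta_j^{(n)}|^2$ is dominated by $C_2^2\bigl(|s|^2+\alpha_n^2\bigr)^{1/2}$, and summing over $n$ gives exactly $\|\mathscr B_j u\|_{H^{-1/2}_{s,\rm p}(\Gamma_j)}^2\le C_2^2\|u\|_{H^{1/2}_{s,\rm p}(\Gamma_j)}^2$. I do not expect any genuine analytic obstacle: the entire content is the elementary symbol inequality, and the only step deserving care is passing to moduli via $|\beta_j^{(n)}|^2=\bigl|(\beta_j^{(n)})^2\bigr|$ so as to sidestep the branch of the square root. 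If a single constant valid for both interfaces is desired, one simply takes the larger of the two constants arising for $j=1,2$.
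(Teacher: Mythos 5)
Your proof is correct and takes essentially the same route as the paper: both reduce the claim to the per-mode symbol bound $|\beta_j^{(n)}|^2\le C_2^2(|s|^2+\alpha_n^2)$ via the diagonal structure of $\mathscr B_j$ and the weighted norms, using $|\beta_j^{(n)}|^2=|(\beta_j^{(n)})^2|$ to avoid the branch of the square root. The only difference is cosmetic — the paper bounds $|\varepsilon_j\mu_j s^2+(\alpha_n+{\rm i}c_1 s)^2|\le\varepsilon_j\mu_j|s|^2+2(\alpha_n^2+c_1^2|s|^2)$ directly while you expand the square and apply Young's inequality, yielding a slightly different but equally valid constant.
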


\begin{proof}
For any $u \in H_{s, \rm p}^{1/2}(\Gamma_j)$, it follow form (\ref{HTN}) that
\begin{align*}
\| \mathscr B_j u\|^2_{H_{s, \rm p}^{-1/2}(\Omega)}
&=\sum\limits_{n \in \mathbb Z} \bigl(|s|^2+\alpha_n^2\bigr)^{-1/2}
|\beta^{(n)}_j|^2 |u_n(h_j)|^2\\
&=\sum\limits_{n \in \mathbb Z}
\bigl(|s|^2+\alpha_n^2\bigr)^{1/2}
\bigl(|s|^2+\alpha_n^2\bigr)^{-1}|\beta^{(n)}_j|^2|u_n(h_j)|^2\\
&\leq C^2_2 \|u\|^2_{H_{s, \rm p}^{1/2}(\Gamma_j)},
\end{align*}
where we have used 
\[
|\beta_j^{(n)}|^2=|\varepsilon_j \mu_j s^2 +(\alpha_n+{\rm i}c_1 s)^2|\leq
\varepsilon_j \mu_j |s|^2+2(\alpha_n^2+c_1^2|s|^2)\leq C_2^2
\bigl(|s|^2+\alpha_n^2\bigr).
\]
Here
\[
C_2^2=\max\{2,\, 2c_1^2+\varepsilon_{\rm max}\mu_{\rm max}\},
\]
which completes the proof. 
\end{proof}

\begin{lemm}\label{TP}
We have the estimate
\[
{\rm Re}\langle (s\mu_j)^{-1}\mathscr B_j u, 
u\rangle_{\Gamma_j}\leq 0,\quad\forall ~ u \in H_{s, \rm p}^{1/2}(\Gamma_j).
\]
\end{lemm}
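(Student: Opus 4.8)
The plan is to diagonalize the sesquilinear form by the Fourier basis and reduce the whole statement to a scalar inequality for each mode. Writing $u(x,h_j)=\sum_{n}u_n e^{{\rm i}\alpha_n x}$ and using the orthogonality $\int_0^\Lambda e^{{\rm i}\alpha_n x}e^{-{\rm i}\alpha_m x}{\rm d}x=\Lambda\delta_{nm}$ together with the definition \eqref{DTM} of $\mathscr B_j$, I would first obtain
\[
\langle (s\mu_j)^{-1}\mathscr B_j u,\,u\rangle_{\Gamma_j}=\frac{\Lambda}{s\mu_j}\sum_{n\in\mathbb Z}\beta_j^{(n)}|u_n|^2.
\]
Since $\mu_j>0$ is real and $(s\mu_j)^{-1}=\bar s/(\mu_j|s|^2)$, taking the real part gives
\[
{\rm Re}\,\langle (s\mu_j)^{-1}\mathscr B_j u,\,u\rangle_{\Gamma_j}=\frac{\Lambda}{\mu_j|s|^2}\sum_{n\in\mathbb Z}{\rm Re}\bigl(\beta_j^{(n)}\bar s\bigr)|u_n|^2.
\]
As $\Lambda,\mu_j,|s|^2>0$ and $|u_n|^2\ge 0$, the lemma reduces to the single-mode estimate ${\rm Re}(\beta_j^{(n)}\bar s)\le 0$ for every $n$.

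The crux, and what I expect to be the main obstacle, is exactly this scalar inequality: it cannot follow from the branch condition ${\rm Re}\,\beta_j^{(n)}<0$ alone, since dividing a left-half-plane number by $s$ need not remain in a half-plane, so the explicit algebraic form of $(\beta_j^{(n)})^2$ must be used. My plan is to convert the inequality into a statement about $(\beta_j^{(n)})^2$ by multiplying through by $a:={\rm Re}\,\beta_j^{(n)}<0$, so that ${\rm Re}(\beta_j^{(n)}\bar s)\le 0$ becomes equivalent to $a\,{\rm Re}(\beta_j^{(n)}\bar s)\ge 0$. Writing $\beta:=\beta_j^{(n)}$ and using $a=\tfrac12(\beta+\bar\beta)$ and ${\rm Re}(\beta\bar s)=\tfrac12(\beta\bar s+\bar\beta s)$, a short expansion yields the key identity
\[
a\,{\rm Re}(\beta\bar s)=\tfrac12\bigl({\rm Re}(\beta^2\bar s)+s_1|\beta|^2\bigr),
\]
which is decisive because it trades the square root $\beta$ for the polynomial $\beta^2$.

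To finish I would insert the explicit symbol. Since $(\alpha_n+{\rm i}c_1 s)^2=\alpha_n^2+2{\rm i}c_1\alpha_n s-c_1^2 s^2$, the definition of $\beta_j^{(n)}$ gives
\[
\beta^2=(\varepsilon_j\mu_j-c_1^2)s^2+2{\rm i}c_1\alpha_n s+\alpha_n^2=\gamma_j s^2+2{\rm i}c_1\alpha_n s+\alpha_n^2,\qquad \gamma_j:=\varepsilon_j\mu_j-c_1^2.
\]
Multiplying by $\bar s$ and using $s^2\bar s=|s|^2 s$ and $s\bar s=|s|^2$, and noting that the middle term $2{\rm i}c_1\alpha_n|s|^2$ is purely imaginary, I get ${\rm Re}(\beta^2\bar s)=s_1(\gamma_j|s|^2+\alpha_n^2)$. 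Substituting into the identity yields
\[
a\,{\rm Re}(\beta\bar s)=\tfrac{s_1}{2}\bigl(\gamma_j|s|^2+\alpha_n^2+|\beta|^2\bigr).
\]
Here I would invoke the standing assumption $\varepsilon\mu\ge\varepsilon_1\mu_1$, which with $c_1^2=\varepsilon_1\mu_1\cos^2\theta$ and $\theta\in(0,\pi)$ forces $\gamma_j=\varepsilon_j\mu_j-c_1^2\ge\varepsilon_1\mu_1\sin^2\theta>0$; combined with $s_1>0$ this makes the right-hand side strictly positive. Hence $a\,{\rm Re}(\beta\bar s)>0$, and since $a<0$ we conclude ${\rm Re}(\beta_j^{(n)}\bar s)<0\le 0$. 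Summing over $n$ in the diagonalized form above then gives the asserted estimate.
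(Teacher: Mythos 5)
Your proof is correct and follows essentially the same route as the paper: both reduce the claim to the modewise inequality ${\rm Re}(\beta_j^{(n)}\bar s)\le 0$, multiply by $a={\rm Re}\,\beta_j^{(n)}<0$ to trade $\beta_j^{(n)}$ for its square, and arrive at the identical key formula $a\,{\rm Re}(\beta_j^{(n)}\bar s)=\tfrac{s_1}{2}\bigl(|\beta_j^{(n)}|^2+(\varepsilon_j\mu_j-c_1^2)|s|^2+\alpha_n^2\bigr)$, whose positivity rests on $\varepsilon_j\mu_j-c_1^2>0$. Your derivation of that identity via $a=\tfrac12(\beta+\bar\beta)$ is a cleaner piece of bookkeeping than the paper's separation into real and imaginary parts, but the argument is the same.
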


\begin{proof}
It follows from the definitions of (\ref{DTM}) and  (\ref{HTN}) that we have
\[
\langle (s\mu_j)^{-1}\mathscr B_j u,  u\rangle_{\Gamma_j}=\sum
\limits_{n \in \mathbb Z} \frac{\bar s\beta_j^{(n)}}{|s|^2 \mu_j} 
|u_n(h_j)|^2.
\]
Let $\beta_j^{(n)} =a_j+{\rm i} b_j, s =s_1 + {\rm i}
s_2$  with $s_1> 0, a_j<0$. Taking the real part of the above equation gives
\begin{equation}\label{TP-6}
{\rm Re} \langle (s\mu_j)^{-1}\mathscr B_j u, 
u\rangle_{\Gamma_j}=\sum\limits_{n \in \mathbb Z}\frac{ (s_1 a_j + s_2
b_j)}{|s|^2\mu_j}|u_n(h_j)|^2.
\end{equation}
Recalling $(\beta_j^{(n)})^2= \varepsilon_j \mu_j s^2 + (\alpha_n+{\rm
i}c_1 s)^2$, we have
\begin{equation}\label{TP-1}
 a_j^2-b_j^2=(\varepsilon_j
\mu_j-c_1^2)(s_1^2-s_2^2)+\alpha_n^2-2\alpha_n c_1 s_2
\end{equation}
and
\begin{equation}\label{TP-2}
a_j b_j=(\varepsilon_j \mu_j-c_1^2)s_1 s_2+\alpha_n c_1
s_1.
\end{equation}
Using \eqref{TP-2}, we get
\begin{equation}\label{TP-3}
s_1 a_j + s_2
b_j=\frac{s_1}{a_j}\bigl[a_j^2+(\varepsilon_j\mu_j-c_1^2)s_2^2+\alpha_n
c_1 s_2\bigr].
\end{equation}
Plugging \eqref{TP-1} into \eqref{TP-3} gives
\begin{equation}\label{TP-4}
s_1 a_j + s_2 b_j=\frac{s_1}{a_j}\bigl[
b_j^2+(\varepsilon_j\mu_j-c_1^2)s_1^2+\alpha_n^2-\alpha_n c_1 s_2\bigr]. 
\end{equation}
Adding \eqref{TP-3} and \eqref{TP-4}, we obtain
\begin{equation}\label{TP-5}
s_1 a_j + s_2 b_j=\frac{s_1}{2a_j}\bigl[
a_j^2+b_j^2+(\varepsilon_j\mu_j-c_1^2)(s_1^2+s_2^2)+\alpha_n^2\bigr].
\end{equation}
Substituting \eqref{TP-5} into \eqref{TP-6} yields 
\begin{align*}
&{\rm Re} \langle (s\mu_j)^{-1}\mathscr B_j u, 
u\rangle_{\Gamma_j}\\
&=\sum\limits_{n \in \mathbb Z}\frac{s_1}
{2a_j |s|^2\mu_j} \bigl[ a_j^2 +b_j^2
+(\varepsilon_j \mu_j-c_1^2)(s_1^2+s_2^2) +\alpha_n^2\bigr]|u_n(h_j)|^2\leq 0,
\end{align*}
which completes the proof. 
\end{proof}

Using the DtN operators (\ref{DTM}), we obtain the following TBC in
the $s$-domain: 
\begin{equation}\label{TBC}
\begin{cases}
\partial_{\nu_1} \breve U =\mathscr B_1 \breve U +\breve\rho&\quad\text{on} ~
\Gamma_1,\\
\partial_{\nu_2} \breve U =\mathscr B_2 \breve U &\quad\text{on}~\Gamma_2,
 \end{cases}
\end{equation}
where  $\breve \rho =\partial_z \breve U^{\rm inc}-\mathscr B_1 \breve U^{\rm
inc}$. Taking the inverse Laplace transform of (\ref{TBC}) yields the TBC in the
time domain:
\begin{equation}\label{TTBC}
\begin{cases}
\partial_{\nu_1} U = \mathscr T_1 U +\rho \quad \text{on}~\Gamma_1,\\
\partial_{\nu_2} U = \mathscr T_2 U \quad\text{on} ~\Gamma_2,
\end{cases}
\end{equation}
where $\rho$ is the inverse Laplace transform of $\breve\rho$,
i.e., $\rho=\mathscr{L}^{-1}(\breve\rho)$, and $\mathscr T_j =\mathscr
L^{-1}\circ \mathscr B_j\circ \mathscr L.$

\section{The Reduced Problem}\label{SRP}

In this section, we present the main results of this work, which include
the well-posedness and stability of the scattering problem and related a
priori estimates.

\subsection{Well-posedness in the $s$-domain} 

Taking the Laplace transform of \eqref{TEU} and using the TBC (\ref{TBC}), we
may consider the following reduced boundary value problem:
\begin{equation}\label{RP}
\begin{cases}
(\varepsilon -c_1^2\mu^{-1}) s  \breve {U}=\nabla
\cdot((s\mu)^{-1} \nabla \breve{U})-c_1(\mu^{-1} \partial_x
\breve U+ \partial_x(\mu^{-1} \breve{U} ) )&\quad\text{in}~\Omega,\\
\partial_{\nu_1} \breve U=\mathscr B_1 \breve U+\breve
\rho&\quad\text{on}~\Gamma_1,\\
\partial_{\nu_2} \breve U=\mathscr B_2 \breve U &\quad\text{on}~\Gamma_2.
\end{cases}
\end{equation}
Next we introduce a variational formulation of the boundary value problem
(\ref{RP}) and give a proof of its well-posedness in the space $H^1_{s,
\rm p}(\Omega)$. 

Multiplying (\ref{RP}) by the complex conjugate of a test function $v \in
H_{s, \rm p}^1(\Omega)$, using the integration by parts and TBCs, we arrive at
the variational problem: To find $\breve
U\in H_{s, \rm p}^1(\Omega)$ such that 
\begin{equation}\label{UVF}
a(\breve U, v) =\langle(s\mu_1)^{-1} \breve \rho, v
\rangle_{\Gamma_1},\quad\forall ~ v\in H_{s, \rm p}^1(\Omega),
\end{equation}
where the sesquilinear form
\begin{align}\label{SLF}
a(\breve U, v) = &\int_\Omega \big[(s \mu)^{-1} \nabla \breve U  \cdot \nabla
{\bar v}+(\varepsilon -c_1^2\mu^{-1}) s \breve U \bar v
+c_1 (\mu^{-1} \partial_x \breve U +\partial_x(\mu^{-1} \breve U))
\bar v \big] {\rm d}x {\rm d}z\notag\\
&- \sum\limits_{j=1}^2 \langle(s\mu_j) ^{-1}  \mathscr B_j \breve U, v
\rangle_{\Gamma_j}.
\end{align}

\begin{theo}\label{VP}
The variational problem (\ref{UVF}) has a unique solution $\breve U \in
H_{s, \rm p}^1(\Omega)$, which satisfies 
\[
\| \nabla \breve U\|_{L^2(\Omega)^2} +\|s\breve U\|_{L^2(\Omega)}
\lesssim s_1^{-1} |s|\| \breve \rho\|_{H_s^{-1/2}(\Gamma_1)}.
\]
\end{theo}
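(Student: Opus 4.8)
The plan is to establish well-posedness via the Lax--Milgram lemma applied to the sesquilinear form $a(\cdot,\cdot)$ of \eqref{SLF} on the Hilbert space $H^1_{s,\rm p}(\Omega)$, and then to extract the stated bound from a single energy identity. Since $s$ is fixed with $s_1>0$, the constants appearing in the Lax--Milgram hypotheses are allowed to depend on $s$; the explicit $s$-dependence is only tracked in the final quantitative step.

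First I would verify continuity of $a$. Each volume integral in \eqref{SLF} is controlled by the Cauchy--Schwarz inequality together with the bounds $0<\mu_{\min}\le\mu\le\mu_{\max}$ and $\varepsilon\le\varepsilon_{\max}$, yielding control by $\|u\|_{H^1_{s,\rm p}(\Omega)}\|v\|_{H^1_{s,\rm p}(\Omega)}$. The boundary terms are handled by combining Lemma \ref{DTN} with the trace estimate of Lemma \ref{TT}: $|\langle(s\mu_j)^{-1}\mathscr B_j u,v\rangle_{\Gamma_j}|\lesssim\|\mathscr B_j u\|_{H_s^{-1/2}(\Gamma_j)}\|v\|_{H_s^{1/2}(\Gamma_j)}\lesssim\|u\|_{H^1_{s,\rm p}(\Omega)}\|v\|_{H^1_{s,\rm p}(\Omega)}$. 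The right-hand side of \eqref{UVF} is a bounded antilinear functional on $H^1_{s,\rm p}(\Omega)$ by the same two lemmas.

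The heart of the argument is coercivity, which I would obtain by testing with $v=u$ and taking real parts. The gradient term contributes ${\rm Re}(1/s)\int_\Omega\mu^{-1}|\nabla u|^2=(s_1/|s|^2)\int_\Omega\mu^{-1}|\nabla u|^2\ge0$, and the zeroth-order term contributes $s_1\int_\Omega(\varepsilon-c_1^2\mu^{-1})|u|^2\ge0$, the sign of the coefficient being exactly the inequality $\varepsilon-c_1^2\mu^{-1}=\varepsilon_1\mu_1\mu^{-1}\sin^2\theta>0$ established in Section \ref{FRP}. For the first-order convection term, integrating $\partial_x(\mu^{-1}u)\bar u$ by parts in $x$ and using the $\Lambda$-periodicity to cancel the boundary contributions shows that this term equals $c_1\int_\Omega\mu^{-1}(\partial_x u\,\bar u-u\,\partial_x\bar u)$, which is purely imaginary and hence drops out of the real part. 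Finally, the two boundary terms are non-negative in the real part by Lemma \ref{TP}. Collecting these facts and using $\mu\le\mu_{\max}$ gives the clean lower bound ${\rm Re}\,a(u,u)\gtrsim(s_1/|s|^2)\bigl(\|\nabla u\|^2_{L^2(\Omega)}+|s|^2\|u\|^2_{L^2(\Omega)}\bigr)=(s_1/|s|^2)\|u\|^2_{H^1_{s,\rm p}(\Omega)}$, where the recombination into the weighted norm \eqref{HPN} is the key structural observation. Lax--Milgram then yields existence and uniqueness.

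For the quantitative estimate I would set $v=\breve U$ in \eqref{UVF}, so that ${\rm Re}\,a(\breve U,\breve U)={\rm Re}\langle(s\mu_1)^{-1}\breve\rho,\breve U\rangle_{\Gamma_1}$. Bounding the right-hand side by $(|s|\mu_1)^{-1}\|\breve\rho\|_{H_s^{-1/2}(\Gamma_1)}\|\breve U\|_{H_s^{1/2}(\Gamma_1)}$, applying Lemma \ref{TT}, and invoking the coercivity bound on the left gives $(s_1/|s|^2)\|\breve U\|^2_{H^1_{s,\rm p}(\Omega)}\lesssim|s|^{-1}\|\breve\rho\|_{H_s^{-1/2}(\Gamma_1)}\|\breve U\|_{H^1_{s,\rm p}(\Omega)}$. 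Dividing by $\|\breve U\|_{H^1_{s,\rm p}(\Omega)}$ and rearranging yields $\|\breve U\|_{H^1_{s,\rm p}(\Omega)}\lesssim s_1^{-1}|s|\|\breve\rho\|_{H_s^{-1/2}(\Gamma_1)}$, and the claimed estimate follows since $\|\nabla\breve U\|_{L^2(\Omega)^2}+\|s\breve U\|_{L^2(\Omega)}\lesssim\|\breve U\|_{H^1_{s,\rm p}(\Omega)}$ by \eqref{HPN}. I expect the main obstacle to be the coercivity step: verifying that the indefinite-looking convection term contributes nothing to the real part and that the boundary terms carry the correct sign through Lemma \ref{TP}, so that the surviving positive contributions assemble precisely into the weighted norm with the sharp factor $s_1/|s|^2$.
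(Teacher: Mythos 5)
Your proposal is correct and follows essentially the same route as the paper: coercivity of $a$ via taking real parts, cancellation of the convection term by periodicity and integration by parts, the sign of the DtN boundary terms from Lemma \ref{TP}, the Lax--Milgram lemma, and the final bound obtained by testing with $v=\breve U$ and invoking Lemma \ref{TT}. The only cosmetic difference is that you spell out the continuity verification, which the paper dispatches in one sentence.
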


\begin{proof}
It suffices to show the coercivity of the sesquilinear form of $a$, since the
continuity follows directly from the Cauchy--Schwarz inequality, Lemma
\ref{TT}, and Lemma \ref{DTN}. 

Letting $v=\breve U$ in \eqref{SLF}, we get 
\begin{align*}
a(\breve U, \breve U) = &\int_\Omega \big[(s \mu)^{-1} |\nabla \breve
U|^2 +(\varepsilon -c_1^2\mu^{-1}) s |\breve U|^2 +c_1 (\mu^{-1}
\partial_x \breve U +\partial_x(\mu^{-1} \breve U))
\bar{\breve U} \big] {\rm d}x {\rm d}z\notag\\
&- \sum\limits_{j=1}^2 \langle (s\mu_j) ^{-1} \mathscr B_j \breve U, \breve U
\rangle_{\Gamma_j}.
\end{align*}
Taking the real part of the above equation yields
\begin{align*}
{\rm Re}\, a (\breve U, \breve U)
=&\int_{\Omega} \bigl(\frac {s_1} {|s|^2 \mu} |\nabla \breve U|^2 +(\varepsilon
-c_1^2\mu^{-1}) s_1 |\breve  U|^2\bigr) {\rm d}x {\rm d}z-{\rm Re} \sum
\limits_{j=1}^2 \langle (s\mu_j) ^{-1} \mathscr B_j \breve U, \breve U
\rangle _{\Gamma_j} \\
&+  c_1 {\rm Re} \int _{\Omega}\bigl(\mu^{-1 } \partial_x \breve U 
\bar {\breve U} + \partial_x (\mu^{-1} \breve U) \bar {\breve U}\bigr){\rm d}x
{\rm d}z.
\end{align*}
Since $\mu $ and $\breve U$ are periodic in $x$, we have from the integration by
part that 
\[
\int _{\Omega}\bigl(\mu^{-1} \partial_x \breve U  \bar {\breve U} + \partial_x
(\mu^{-1} \breve U) \bar {\breve U}\bigr) {\rm d}x{\rm d}z+\int_\Omega
\bigl(\breve U \partial_x(\mu^{-1} \bar {\breve U})+\mu^{-1} \breve U \partial_x
\bar {\breve U}\bigr){\rm d}x {\rm d}z=0,
\]
which gives 
\[
 {\rm Re }\int _{\Omega}\bigl(\mu^{-1}\partial_x \breve U \bar {\breve U} +
\partial_x (\mu^{-1} \breve U) \bar {\breve U}\bigr){\rm d}x {\rm d}z=0.
\]
Combining the above estimate and Lemma \ref{TP}, we obtain 
\begin{equation}\label{CP}
{\rm Re}\, a (\breve U, \breve U) \geq C\frac {s_1}{ |s|^2}
\int_{\Omega } \bigl(|\nabla \breve U|^2 + |s\breve U|^2 \bigr) {\rm d} x
{\rm d}z,
\end{equation}
where  $C= \mu^{-1}_{\rm max}\min \{1,\, \varepsilon_1\mu_1\sin^2\theta\}$.

It follows from the Lax--Milgram lemma that the variational problem (\ref{UVF})
has a unique solution $\breve U \in H_{s, \rm p}^1(\Omega).$
Moreover, we have from (\ref{UVF}) and Lemma {\ref{TT}} that
\begin{equation}\label{AE}
|a (\breve U, \breve U)| \leq (|s|\mu_1)^{-1} \| \breve
\rho\|_{H_s^{-1/2}(\Gamma_1)} \| \breve U\|_{H_s^{1/2}(\Gamma_1)}\leq
C_1 (|s|\mu_1)^{-1} \| \breve \rho\|_{H_s^{-1/2}(\Gamma_1)} \| \breve
U\|_{H_{s, \rm p}^{1}(\Omega)}.
\end{equation}
Combing (\ref{CP}) and (\ref{AE}) leads to
\[
\|\nabla \breve U\|^2_{L^2(\Omega)^2} +\|s\breve
U\|^2_{L^2(\Omega)} \lesssim s_1^{-1} |s|\| \breve 
\rho\|_{H_s^{-1/2}(\Gamma_1)}\| \breve
U\|_{H_{s, \rm p}^{1}(\Omega)},
\]
which completes the proof after applying the Cauchy--Schwarz inequality.
\end{proof}

\subsection{Well-posedness in the time-domain}

Using the time-domain TBC \eqref{TTBC}, we consider the reduced
initial-boundary value problem:
\begin{equation}\label{TRP}
\begin{cases}
(\varepsilon -c_1^2\mu^{-1})\partial_t^2 U=\nabla \cdot (\mu^{-1}
\nabla U) -c_1(\mu^{-1} \partial_{tx} U +\partial_x(\mu^{-1}
\partial_t U)) &\quad\text{in}~\Omega, ~ t>0,\\
U|_{t=0} =\partial_t U|_{t=0}=0 &\quad\text{in} ~ \Omega, \\
\partial_{\nu_1} U= \mathscr T_1 U + \rho &\quad\text{on}~\Gamma_1, ~t>0,\\
\partial_{\nu_2} U= \mathscr T_2 U  &\quad\text{on}~\Gamma_2, ~t>0.
\end{cases}
\end{equation}

The following lemma (cf. \cite[Theorem 43.1]{Treves 1975})  is an
analogue of Paley--Wiener--Schwarz theorem for Fourier transform of the
distributions with compact support in the case of Laplace transform.

\begin{lemm}\label {A2}
 Let $\breve h(s)$ denote a holomorphic function in the half-plane
$s_1 > \sigma_0$ , valued in the Banach space $\mathbb E$. The two following conditions are
equivalent:
\begin{enumerate}

\item there is a distribution $h \in \mathcal D_{+}'(\mathbb E)$ whose Laplace
transform is equal to $\breve h(s)$;

\item there is a real $\sigma_1$ with $\sigma_0 \leq \sigma_1 <\infty$ and an
integer $m \geq 0$ such that for all complex numbers $s$ with ${\rm Re} s =s_1 >
\sigma_1,$ it holds that $\| \breve h (s)\|_{\mathbb E} \lesssim (1+|s|)^{m}$,

\end{enumerate}
where $\mathcal D'_{+}(\mathbb E)$ is the space of distributions on the real
line which vanish identically in the open negative half line.
\end{lemm}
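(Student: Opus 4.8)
The plan is to recognize this as the Laplace-transform analogue of the Paley--Wiener--Schwarz theorem and to prove the two implications separately. The implication $(1)\Rightarrow(2)$ is the routine ``growth from structure'' direction and follows from the structure theorem for tempered distributions, while $(2)\Rightarrow(1)$ is the substantive Paley--Wiener direction, in which the distribution $h$ must be reconstructed from $\breve h$ by a regularized inverse Laplace (Bromwich) integral.

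For $(1)\Rightarrow(2)$, I would start from the hypothesis that $h\in\mathcal D'_+(\mathbb E)$ has Laplace transform $\breve h(s)$ holomorphic for $s_1>\sigma_0$. Convergence of the Laplace transform in this half-plane means that, for each fixed $\sigma>\sigma_0$, the damped distribution $e^{-\sigma t}h$ is a tempered $\mathbb E$-valued distribution supported in $[0,\infty)$. Invoking the structure theorem, one may write such a distribution as a finite-order distributional derivative $e^{-\sigma t}h=\partial_t^m G$ of a continuous $\mathbb E$-valued function $G$ of at most polynomial growth. Applying $\mathscr L$ and using that differentiation in $t$ corresponds to multiplication by $s$ (with no boundary contributions, since everything is supported in $t\ge 0$) produces a factor $s^m$ against a bounded transform of $G$, which yields the required estimate $\|\breve h(s)\|_{\mathbb E}\lesssim (1+|s|)^m$ for $s_1$ large.

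For $(2)\Rightarrow(1)$, the plan is constructive. Choose an integer $k$ with $k>m+1$, so that $\breve h(s)s^{-k}$ is absolutely integrable along every vertical line ${\rm Re}\, s=\sigma_1$; then define the $\mathbb E$-valued function
\[
g(t)=\frac{1}{2\pi{\rm i}}\int_{\sigma_1-{\rm i}\infty}^{\sigma_1+{\rm i}\infty}\frac{\breve h(s)}{s^k}\,e^{st}\,{\rm d}s .
\]
I would then verify three facts. First, $g$ is continuous in $t$ and independent of the choice of $\sigma_1>\sigma_0$, by Cauchy's theorem applied to the holomorphic integrand. Second, $g(t)=0$ for $t<0$: for such $t$ one pushes the contour to the right, $\sigma_1\to+\infty$, and uses $|e^{st}|=e^{\sigma_1 t}\to 0$ together with the bound $\|\breve h(s)s^{-k}\|_{\mathbb E}\lesssim (1+|s|)^{m-k}$ to show the integral vanishes. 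Third, $g$ grows at most exponentially in $t$, so $g\in\mathcal D'_+(\mathbb E)$. Setting $h:=\partial_t^k g$ as a distribution and using $\mathscr L(\partial_t^k g)=s^k\mathscr L(g)=\breve h(s)$ (the boundary terms again vanishing because $g$ is supported in $t\ge 0$) completes the construction.

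The main obstacle is the support statement $g(t)=0$ for $t<0$, which is exactly where the holomorphy of $\breve h$ in the entire half-plane and the uniform polynomial bound are both essential; one must justify the rightward contour shift and the vanishing of the arc contributions uniformly in the $\mathbb E$-norm. A secondary technical point is that the whole argument takes place in the Banach-valued setting, so I would rely throughout on the vector-valued versions of holomorphy, the Cauchy integral theorem, and the structure theorem for tempered distributions, all of which remain valid for an arbitrary Banach space $\mathbb E$.
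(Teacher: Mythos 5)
The paper does not actually prove this lemma: it is quoted verbatim as Theorem 43.1 of Tr\`eves's \emph{Basic Linear Partial Differential Equations} and used as a black box, so there is no in-paper argument to compare yours against. Judged on its own, your outline is the standard proof of that Paley--Wiener--Schwartz-type theorem and is essentially correct, but two steps need more care than you give them. In $(1)\Rightarrow(2)$ you must invoke the structure theorem in the form adapted to distributions supported in a half-line, producing $e^{-\sigma t}h=\partial_t^m G$ with $G$ continuous, of polynomial growth \emph{and with support bounded below}, say in $[-1,\infty)$; only then does the pairing $\langle \partial_t^m G, e^{-(s-\sigma)t}\rangle=(s-\sigma)^m\int G(t)e^{-(s-\sigma)t}\,{\rm d}t$ converge and yield a factor uniformly bounded for ${\rm Re}\,s\ge\sigma+\delta$. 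Your phrase ``bounded transform of $G$'' silently uses both the lower support bound and the exponential damping; the Fourier transform of a polynomially growing $G$ is by itself only a tempered distribution, so the naive ``differentiation becomes multiplication by $s$'' argument on a vertical line does not give a pointwise bound. In $(2)\Rightarrow(1)$ you should take $\sigma_1>0$ (harmless, since the hypothesis persists when $\sigma_1$ is increased) so that $s^{-k}$ is holomorphic on and to the right of the Bromwich contour; and the identity $\mathscr L(g)=s^{-k}\breve h(s)$ for ${\rm Re}\,s>\sigma_1$, which you use implicitly when you write $\mathscr L(\partial_t^k g)=s^k\mathscr L(g)=\breve h(s)$, requires either Fourier inversion on the line or the Cauchy integral formula in the half-plane, again exploiting the decay $(1+|s|)^{m-k}$ with $k>m+1$. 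With those repairs the argument is complete and is, as far as I can tell, the same route taken in the cited reference.
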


\begin{theo}
The initial-boundary value problem  (\ref{TRP}) has a unique solution $U(x, z,
t)$, which satisfies
\[
U(x, z, t) \in L^2(0, T; H_{\rm p}^1(\Omega)) \cap H^{1}(0, T; L^2(\Omega))
\]
and the stability estimate
\begin{align}\label{ST}
\max \limits_{t \in [t, T]}
&\big(\|\partial_t U\|_{L^2(\Omega)}+\|\partial_t (\nabla U)\|_{L^2 (\Omega)^2} 
\big)  \lesssim \big( \|\rho\|_{L^1(0, T; H^{-1/2}(\Gamma_1))}\nonumber\\
&+\max \limits_{t \in [0, T ]}\|\partial_t\rho\|_{
H^{-1/2}(\Gamma_1)}+\|\partial^2_t\rho\|_{L^1(0, T;
H^{-1/2}(\Gamma_1))}\big).
\end{align}
\end{theo}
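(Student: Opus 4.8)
The plan is to combine the frequency-domain well-posedness of Theorem~\ref{VP} with the Paley--Wiener--Schwarz characterization of Lemma~\ref{A2} for existence and uniqueness, and then to establish the stability bound \eqref{ST} by a direct energy argument on \eqref{TRP} in the time domain.

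First I would prove existence and uniqueness. Taking the Laplace transform of \eqref{TRP} in $t$ and using the vanishing initial data produces exactly the reduced problem \eqref{RP}, which by Theorem~\ref{VP} has, for every $s$ with $s_1>0$, a unique solution $\breve U(s)\in H^1_{s,\rm p}(\Omega)$ depending holomorphically on $s$. The bound of Theorem~\ref{VP} gives $\|\nabla\breve U\|_{L^2(\Omega)^2}+\|s\breve U\|_{L^2(\Omega)}\lesssim s_1^{-1}|s|\,\|\breve\rho\|_{H_s^{-1/2}(\Gamma_1)}$, and since $U^{\rm inc}$ is explicit the data $\breve\rho=\partial_z\breve U^{\rm inc}-\mathscr B_1\breve U^{\rm inc}$ grows at most polynomially in $|s|$ on any half-plane $s_1\ge\sigma_1>0$. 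Hence $\|\breve U(s)\|_{H^1_{s,\rm p}(\Omega)}\lesssim(1+|s|)^m$ there, and Lemma~\ref{A2} yields a distribution $U\in\mathcal D'_+(H^1_{\rm p}(\Omega))$, vanishing for $t<0$, whose Laplace transform is $\breve U$; this is the desired solution, and uniqueness is immediate from the injectivity of the Laplace transform. The regularity $U\in L^2(0,T;H^1_{\rm p}(\Omega))\cap H^1(0,T;L^2(\Omega))$ then follows a posteriori from the energy bound below.

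For the stability estimate I would argue directly on \eqref{TRP}. Multiplying the wave equation by $\partial_t\bar U$, integrating over $\Omega$, taking real parts and invoking the TBC \eqref{TTBC} produces the energy identity
\[
\tfrac12\frac{{\rm d}}{{\rm d}t}\int_\Omega\bigl((\varepsilon-c_1^2\mu^{-1})|\partial_t U|^2+\mu^{-1}|\nabla U|^2\bigr)\,{\rm d}x\,{\rm d}z={\rm Re}\sum_{j=1}^2\langle\mu_j^{-1}\mathscr T_j U,\partial_t U\rangle_{\Gamma_j}+{\rm Re}\,\langle\mu_1^{-1}\rho,\partial_t U\rangle_{\Gamma_1},
\]
the convection terms in $c_1$ dropping out after integration by parts in $x$ exactly as in the proof of Theorem~\ref{VP}. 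Differentiating the equation once in $t$ and setting $W=\partial_t U$ (which again has vanishing initial data and solves the same problem with $\rho$ replaced by $\partial_t\rho$, since $\mathscr T_j$ commutes with $\partial_t$) gives a second identity controlling $\|\partial_t^2U\|_{L^2(\Omega)}$ and $\|\partial_t\nabla U\|_{L^2(\Omega)^2}$. Integrating both identities over $[0,t]$ and combining them will control $\|\partial_t U\|_{L^2(\Omega)}$ and $\|\partial_t\nabla U\|_{L^2(\Omega)^2}$, the two identities coupling through the trace term on $\Gamma_1$.

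The main obstacle is the nonlocal-in-time boundary contribution ${\rm Re}\int_0^t\langle\mu_j^{-1}\mathscr T_j U,\partial_t U\rangle_{\Gamma_j}\,{\rm d}\tau$, which must be shown to be nonpositive so that it can simply be discarded. The key is to transfer the frequency-domain coercivity of Lemma~\ref{TP} to the time domain: since $\mathscr L(\partial_t U)=s\breve U$, Lemma~\ref{TP} is equivalent to ${\rm Re}\,\langle\mu_j^{-1}\mathscr B_j\breve U,s\breve U\rangle_{\Gamma_j}\le0$, and Plancherel's identity for the Laplace transform applied to the causal field $U$ converts this into $\int_0^\infty e^{-2s_1 t}\,{\rm Re}\,\langle\mu_j^{-1}\mathscr T_j U,\partial_t U\rangle_{\Gamma_j}\,{\rm d}t\le0$ for every $s_1>0$; a cutoff and limiting argument exploiting the causality of $U$ then delivers the required sign on each finite interval $[0,t]$. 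Granting this, the forcing terms are handled by integration by parts in $t$: in the differentiated identity, $\int_0^t\langle\mu_1^{-1}\partial_t\rho,\partial_t W\rangle_{\Gamma_1}\,{\rm d}\tau=\langle\mu_1^{-1}\partial_t\rho,W\rangle_{\Gamma_1}\big|_0^t-\int_0^t\langle\mu_1^{-1}\partial_t^2\rho,W\rangle_{\Gamma_1}\,{\rm d}\tau$, which together with the trace bound of Lemma~\ref{TT} (controlling $\|\cdot\|_{H^{1/2}(\Gamma_1)}$ by $\|\cdot\|_{H^1(\Omega)}$) yields precisely the $\max_{[0,T]}\|\partial_t\rho\|_{H^{-1/2}(\Gamma_1)}$ and $\|\partial_t^2\rho\|_{L^1(0,T;H^{-1/2}(\Gamma_1))}$ terms, while the undifferentiated identity produces the $\|\rho\|_{L^1(0,T;H^{-1/2}(\Gamma_1))}$ term. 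Finally, the Cauchy--Schwarz and Young inequalities absorb the energy factors on the right, and taking the maximum over $t\in[0,T]$ gives \eqref{ST}.
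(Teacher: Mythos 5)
Your proposal is correct and follows essentially the same route as the paper: Laplace transform plus Theorem~\ref{VP} and Lemma~\ref{A2} for existence and uniqueness, then the two-level energy argument (the identity for $U$ and for $\partial_t U$) with the sign of the nonlocal boundary term obtained from Lemma~\ref{TP} via the Parseval identity and the limit $s_1\to 0$, and integration by parts in $t$ on the forcing term to produce the $\max_{[0,T]}\|\partial_t\rho\|_{H^{-1/2}(\Gamma_1)}$ and $\|\partial_t^2\rho\|_{L^1(0,T;H^{-1/2}(\Gamma_1))}$ contributions. The only cosmetic difference is that the paper establishes the $L^2(0,T;H^1_{\rm p}(\Omega))\cap H^1(0,T;L^2(\Omega))$ regularity directly from the $s$-domain bound through the Plancherel identity, whereas you defer it to the energy estimate; both are valid.
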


\begin{proof}
First we have 
\begin{align*}
&\int_0^T \big( \|\nabla U\|_{L^2(\Omega)^2}^2+\|\partial_t
U\|^2_{L^2(\Omega)}\big) {\rm d} t\\
\leq & \int_0^T e^{- 2 s_1(t- T)}\big ( \|\nabla U\|_{L^2(\Omega)^2}^2 
+\|\partial_t U\|^2_{L^2(\Omega)}\big) {\rm d} t\\
=& e^{2 s_1 T} \int_0^T e^{-2 s_1 t} \big ( \|\nabla U\|_{L^2(\Omega)^2}^2 
+\|\partial_t U\|^2_{L^2(\Omega)}\big) {\rm d} t\\
\lesssim & \int_0^{\infty} e^{-2 s_1 t}  \big ( \|\nabla U\|_{L^2(\Omega)^2}^2 
+\|\partial_t U\|^2_{L^2(\Omega)}\big) {\rm d} t.
\end{align*}
Hence it suffices to estimate the integral
\[
\int_0^{\infty} e^{-2 s_1 t}  \big ( \|\nabla U\|_{L^2(\Omega)^2}^2 
+\|\partial_t U\|^2_{L^2(\Omega)}\big) {\rm d} t.
\]
Taking the Laplace transform of (\ref{TRP}) yields
\begin{equation*}
\left\{
\begin{array}{ll}
(\varepsilon -c_1^2\mu^{-1}) s  \breve {U}=\nabla
\cdot((s\mu)^{-1} \nabla \breve{U})-c_1(\mu^{-1} \partial_x
\breve U+ \partial_x(\mu^{-1} \breve{U} ) )&\quad\text{in}~\Omega,\\
\partial_{\nu_1} \breve U=\mathscr B_1 \breve U+\breve \rho
&\quad\text{on}~\Gamma_1,\\
\partial_{\nu_2} \breve U=\mathscr B_2 \breve U &\quad\text{on}~\Gamma_2.
\end{array}
\right.
\end{equation*}
 The well-posedness of $\breve U \in H_{s, \rm p}^1(\Omega)$ follows directly
from Theorem {\ref{VP}}. By the trace theorem in Lemma \ref{TT}, we get
\[
\|\nabla  \breve U\|^2_{L^2 (\Omega)^2}+ \|s \breve U \|^2_{L^2(\Omega)}
\lesssim s_1^{-2} |s|^2  \|\breve \rho \|^2_{H^{-1/2}(\Gamma_1)} \lesssim
s_1^{-2} |s|^2 \|  \breve U^{\rm inc} \|^2_{H_{\rm p}^1(\Omega)}.
\]
It follows from \cite[Lemma 44.1]{Treves1975} that $\breve U$ is a holomorphic
function of $s$ on the half plane $s_1 >\bar\gamma>0,$  where $\bar \gamma$ is
any positive constant. Hence we have from Lemma \ref{A2} that the inverse
Laplace transform of $\breve U$ exists and is supported in $[0, \infty].$

One may verify from the inverse Laplace transform that
\[
\breve U =\mathscr L (U) =\mathscr F (e^{-s_1 t}  U),
\]
where $\mathscr F $ is the Fourier transform with respect to $s_2$. Recall  the
Plancherel or Parseval identity for the Laplace transform (cf.
\cite[(2.46)]{Cohen2007})
 \begin{equation}\label{PI}
 \frac{1}{2 \pi} \int_{- \infty}^{\infty} \breve u(s)  \breve  v(s) {\rm d} s_2= \int_0^{\infty} e^{- 2 s_1 t}  u(t)
 v(t) {\rm d} t,\quad\forall ~ s_1>\lambda,
 \end{equation}
where $\breve u= \mathscr L (u), \breve  v= \mathscr L (v)$ and $\lambda$ is
abscissa of convergence for the Laplace transform of $u$ and $v.$

Using (\ref{PI}), we have
 \begin{align*}
 \int_0^{\infty} e^{-2s_1 t} \big(\|\nabla U\|^2_{L^2 (\Omega)^2}+\|\partial_t U
\|^2_{L^2(\Omega)}\big){\rm d}t
 =\frac{1}{2 \pi} \int_{-\infty}^{\infty}\big( \|\nabla \breve
U\|^2_{L^2(\Omega)^2}+ \|s \breve U\|^2_{L^2(\Omega)}\big) {\rm d} s_2\\
 \lesssim  s_1 ^{-2} \int_{-\infty}^{\infty} |s|^2 \big( \|  \breve U^{\rm inc}
\|^2_{L^2(\Omega)} +\|\nabla\breve U^{\rm inc}\|^2_{L^2(\Omega)^2}\big) {\rm d}
s_2.
 \end{align*}
Since $U^{\rm inc}|_{t=0} =\partial_t U^{\rm inc}|_{t=0} =0$ in $\Omega$,  we
have $\mathscr L (\partial_t U^{\rm inc}) = s \breve U^{\rm inc}$ in $\Omega.$
It is easy to note that
\begin{align*}
|s|^2 \breve U^{\rm inc}=(2 s_1 -s) s \breve U^{\rm inc}& = 2 s_1 \mathscr
L(\partial_t U^{\rm inc}) -\mathscr L (\partial^{2}_t U^{\rm
inc}),\\
|s|^2 \nabla\breve U^{\rm inc}=(2 s_1 -s) s \nabla\breve U^{\rm inc} &= 2 s_1
\mathscr L(\partial_t \nabla U^{\rm inc}) -\mathscr L (\partial^{2}_t\nabla
U^{\rm inc}).
\end{align*}
Hence we have
 \begin{align*}
& \int_0^{\infty} e^{-2s_1 t} \big(\|\nabla U\|^2_{L^2
(\Omega)^2}+\|\partial_t U \|^2_{L^2(\Omega)}\big) {\rm d}t\\
&\lesssim \int_{- \infty}^ {\infty} \|\mathscr L (\partial_t U^{\rm
inc})\|^2_{L^2(\Omega)} {\rm d}s_2+  s_1^{-2}\int_{-\infty}^{\infty} \| \mathscr
L (\partial_t^2 U^{\rm inc})\|^2_{L^2 (\Omega)} {\rm d} s_2\\
&+\int_{- \infty}^ {\infty} \|\mathscr L (\partial_t \nabla U^{\rm
inc})\|^2_{L^2(\Omega)^2} {\rm d}s_2+  s_1^{-2}\int_{-\infty}^{\infty} \|
\mathscr L(\partial_t^2 \nabla U^{\rm inc})\|^2_{L^2(\Omega)^2} {\rm d} s_2.
 \end{align*}
 Using the Parseval identity \eqref{PI} again gives
 \begin{align*}
& \int_0^{\infty} e^{-2s_1 t} \big(\|\nabla U\|^2_{L^2 (\Omega)^2}+\|\partial_t
U \|^2_{L^2(\Omega)}\big){\rm d}t\\
 &\lesssim\int_0^{\infty} e^{-2 s_1 t} \|\partial_t U^{\rm
inc}\|^2_{H^1_{\rm p}(\Omega)} {\rm d} t+ s_1^{-2} \int_0^{\infty} e^{- 2 s_1 t}
\|\partial_t^2 U^{\rm inc}\|^2_{H^1_{\rm p}(\Omega)} {\rm d} t,
 \end{align*}
 which shows that
 \[
 U(x, z, t) \in L^2(0, T; H_{\rm p}^1(\Omega) \cap H^{1}(0, T; L^2(\Omega)).
 \]

Next we prove the stability. Let $\tilde U (x, z, t)$ be the extension of $U(x,
z, t)$ with respect to $t$ in $\mathbb R$ such that $\tilde U (x, z, t)= 0$
outside the interval $[0, t].$ By the Parseval identity (\ref{PI}), we follow
the proof of Lemma \ref{TP} and get
 \begin{align*}
 {\rm Re} \int_0^t e^{-2 s_1 t} \langle\mathscr T_j U,  
\partial_t U\rangle_{\Gamma_j} {\rm d} t
 &= {\rm Re} \int_{\Gamma_j} \int_0^{\infty}
 e^{-2 s_1 t} \langle\mathscr T_j \tilde {U},  \partial_t \tilde
U\rangle_{\Gamma_j} {\rm d} t\\
 &=\frac{1}{2 \pi} \int_{-\infty}^{\infty} {\rm Re} \langle \mathscr T_j \breve
{\tilde U}, s \breve {\tilde U}\rangle_{\Gamma_j} {\rm d} s_2  \leq 0,
 \end{align*}
 which yields after taking $s_1 \rightarrow 0$ that
 \begin{equation}\label{TPT}
 {\rm Re} \int_0^t \langle \mathscr T_j U,  \partial_t U\rangle_{\Gamma_j}{\rm
d} t \leq 0.
 \end{equation}
 
 For any $0<t< T,$ consider the energy function
 \[
 e_1(t) =\|(\varepsilon -c_1^2\mu^{-1})^{1/2} \partial_t U(\cdot,
t)\|^2_{L^2 (\Omega)} +\|\mu^{-1/2} \nabla U(\cdot, t)\|^2_{L^2(\Omega)^2}.
 \]
It follows from (\ref{TRP}) that we have
 \begin{align*}
 \int_0^t e'(t) dt
 =&2 {\rm Re } \int_0^t \int_{\Omega}\big( (\varepsilon -
c_1^2\mu^{-1})\partial_t^2 U  \partial_t\bar {U}
 +\mu^{-1} \partial_t(\nabla U)\cdot \nabla \bar{U} \big) {\rm d} x {\rm
d} z {\rm d}t\\
 =& 2 {\rm Re} \int_0 ^t \int_\Omega \big( \nabla \cdot(\mu^{-1} \nabla U)
\partial_t \bar{U} +\mu^{-1} \partial_t(\nabla U) \cdot \nabla \bar{U}\big) {\rm
d} x {\rm d} z {\rm d}t\\
 &-2 {\rm Re} \int_0^t \int_\Omega \big(c_1(\mu^{-1} \partial_{tx} U+\partial_x
(\mu^{-1} \partial_t U))  \partial_t \bar{U}\big) {\rm d} x {\rm d} z {\rm
d}t.
 \end{align*}
 Since $\mu$ and $U$ are periodic functions in $x$, integrating by parts yields
 \[
 \int_0^t \int_{\Omega}\big( \mu^{-1} \partial_{tx}U  \partial_t
\bar{U}+\mu^{-1} \partial_{tx}\bar{U}  \partial_t  U+
 \partial_x(\mu^{-1} \partial_t U)  \partial_t \bar{U} +\partial_x(\mu^{-1}
\partial_t \bar{U}) \partial_t U \big) {\rm d} x {\rm d} z {\rm d}t=0,
 \]
which gives 
 \[
{\rm Re} \int_0^t \int_\Omega \big(c_1 (\mu^{-1} \partial_{tx}
U+\partial_x (\mu^{-1} \partial_t U))  \partial_t \bar{U}\big) {\rm d} x
{\rm d} z {\rm d}t=0.
 \]
 Since $e_1(0) =0$, we obtain from \eqref{TPT} that
 \begin{align*}
  e_1(t)=&\int_0^t e'(t) {\rm d} t
  = 2{\rm Re} \int_0^t  \int_\Omega \big(-\mu^{-1} \nabla U \cdot \partial_t 
(\nabla \bar{U})+\mu^{-1} \partial_t(\nabla U) \cdot \nabla \bar{U}\big) {\rm d}
x {\rm d} z {\rm d}t \\
  &+   2{\rm Re }\int_0^t \sum\limits_{j=1}^2 \int_{\Gamma_j} \mu_j^{-1}
\partial_\nu U   \partial_t \bar{U} {\rm d} \gamma_j {\rm d}t\\
 =& 2{\rm Re }\int_0^t  \sum\limits_{j=1}^2 \mu_j^{-1}\langle \mathscr
T_j U, \partial_t U\rangle_{\Gamma_j}{\rm d}t
 +2 {\rm Re}  \int_0^t \langle\rho, \partial_t U\rangle_{\Gamma_1} {\rm  d}t\\
 \leq &2{\rm Re}  \int_0^t \big(\|\rho\|_{H^{-1/2}(\Gamma_1)} \|\partial_t
U\|_{H^{1/2}(\Gamma_1)} \big){\rm d} t\\
 \lesssim  &2{\rm Re}  \int_0^t \big(\|\rho\|_{H^{-1/2}(\Gamma_1)} \|\partial_t
U\|_{H_{\rm p}^{1}(\Omega)}\big) {\rm  d} t\\
 \leq & 2\big( \max\limits_{t \in [0, T]}\|\partial_t U\|_{H^1_{\rm
p}(\Omega)} \big)\|\rho\|_{L^1(0, T; H^{-1/2}(\Gamma_1))}.
 \end{align*}
 Taking the derivative of  (\ref{TRP}) with respect to $t,$  we know that
$\partial_t U$ also satisfies the same equations with $\rho$ replaced by
$\partial_t \rho$. Hence, we may consider the similar energy function
 \[
 e_2(t) =\|(\varepsilon -c_1^2\mu^{-1})^{1/2} \partial^2_t U(\cdot,
t)\|^2_{L^2 (\Omega)} +\|\mu^{-1/2} \partial_t ( \nabla U(\cdot,
t))\|^2_{L^2(\Omega)^2}
 \]
and get the estimate
 \begin{align*}
 e_2 (t)
 \leq& 2 {\rm Re}  \int_0^t  \int_{\Gamma_1}\partial_t \rho \, \partial^2_t
\bar{U} {\rm d} \gamma_1 {\rm d }t\\
 =&2 {\rm Re}  \int_{\Gamma_1} \partial_t \rho \, \partial_t \bar U \mid_0^t
{\rm d} \gamma_1- 2 {\rm Re} \int_0^t \int_{\Gamma_1}\partial_t^2 \rho \,
\partial_t \bar U {\rm d} \gamma_1 {\rm d} t\\
 \leq& 2 \big(\max\limits_{t \in [0, T]}\|\partial_t U\|_{H^1_{\rm p}
(\Omega)}\big) \big(\max \limits_{t \in [0, T ]}\|\partial_t\rho\|_{
H^{-1/2}(\Gamma_1)}+\|\partial^2_t\rho\|_{L^1(0, T;
H^{-1/2}(\Gamma_1))}\big).
 \end{align*}
 Combing the above estimates, we can obtain
 \begin{align*}
 &\max\limits_{t \in [0, T]}\|\partial_t U\|^2_{H^1_{\rm p}
(\Omega)}\lesssim \max\limits_{t\in [0, T]}  e_1(t)+e_2(t)\\
&\lesssim \big( \|\rho\|_{L^1(0, T; H^{-1/2}(\Gamma_1))}+\max \limits_{t \in
[0, T ]}\|\partial_t\rho\|_{ H^{-1/2}(\Gamma_1)}+\|\partial^2_t\rho\|_{L^1(0,
T; H^{-1/2}(\Gamma_1))}\big) \| \partial_t U\|_{H_p^1(\Omega)},
 \end{align*}
 which give the estimate (\ref{ST}) after applying the Cauchy--Schwarz
inequality.
 \end{proof}
 
\subsection{A priori estimates}

In this section, we derive a priori estimates for the total field 
with a minimum regularity requirement for the data and an explicit dependence on
the time.

The variation problem of (\ref{TRP}) in time domain is to find $U \in
H^1_{\rm p}(\Omega)$ for all $t>0$ such that
\begin{align}\label{TVP}
\int_{\Omega}(\varepsilon-c_1^2\mu^{-1})\partial_t^2 U  \bar w {\rm
d} x {\rm d}z= -\int_{\Omega} \mu^{-1} \nabla U \cdot \nabla \bar w {\rm d} x 
{\rm d} z+\sum\limits_{j=1}^2 \int_{\Gamma_j} \mu_j^{-1} \mathscr T_j
U  \bar w {\rm d}\gamma_j \nonumber\\
+\int_{\Gamma_1} \rho  \bar w {\rm d} \gamma_1 -c_1\int_{\Omega}(\mu^{-1}
\partial_{tx} U+ \partial_x (\mu^{-1} \partial_t U)) \bar w {\rm d}x {\rm
d}z,\quad\forall~ w\in H^1_{\rm p}(\Omega). 
\end{align}
To show the stability of its solution, we follow the argument in \cite{Treves
1975} but with a careful study of the TBC.

\begin{theo}
Let $U \in H^1_{\rm p}(\Omega)$ be the solution of (\ref{TRP}). Given $\rho \in
L^1(0, T; H^{-1/2}(\Gamma_1))$,  we have for any $T>0$ that 
\begin{equation}\label{ES1}
\|U\|_{L^{\infty} (0, T; L^2(\Omega))} +\|\nabla U\|_{L^{\infty}(0, T;
L^2(\Omega))} \lesssim T\| \rho\|_{L^1(0, T; H^{-1/2} (\Gamma_1))}
+ \| \partial_t\rho\|_{L^1(0, T; H^{-1/2} (\Gamma_1))}.
\end{equation}
and
\begin{equation}\label{ES2}
\|U\|_{L^2(0, T; L^2(\Omega))}+\| \nabla U\|_{L^2 (0, T; L^2 (\Omega))}
\lesssim T^{3/2} \| \rho\|_{L^1(0, T; H^{-1/2}(\Gamma_1))}
+T^{1/2} \| \partial_t \rho\|_{L^1(0, T; H^{-1/2}(\Gamma_1))}.
\end{equation}

\end{theo}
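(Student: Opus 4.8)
The plan is to run an energy argument on the time-domain variational formulation \eqref{TVP}, using the dissipativity of the DtN map already recorded in \eqref{TPT} together with a Lions--Treves auxiliary test function to lower the regularity demand on the data. Fix $\xi\in(0,T)$ and introduce $\phi(x,z,t)=\int_t^{\xi}U(x,z,\tau)\,{\rm d}\tau$ for $0\le t\le\xi$ and $\phi\equiv0$ for $t>\xi$, so that $\partial_t\phi=-U$ on $(0,\xi)$, $\phi(\xi)=0$, and $\partial_t\phi(0)=-U(0)=0$. Testing \eqref{TVP} with $w=\phi$ and integrating over $(0,\xi)$, the inertial and principal-part terms integrate in time to the nonnegative quantity $\tfrac12\|(\varepsilon-c_1^2\mu^{-1})^{1/2}U(\xi)\|_{L^2(\Omega)}^2+\tfrac12\|\mu^{-1/2}\nabla\phi(0)\|_{L^2(\Omega)^2}^2$, whose first term controls $\|U(\xi)\|_{L^2(\Omega)}^2$ since $\varepsilon-c_1^2\mu^{-1}\ge\varepsilon_1\mu_1\mu^{-1}\sin^2\theta>0$.

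The first-order $c_1$-terms disappear: the operator $Lu:=\mu^{-1}\partial_x u+\partial_x(\mu^{-1}u)$ is skew-adjoint on periodic functions, so, rewriting the $c_1$-contribution through $\partial_t\phi$ and integrating by parts once in time, its real part reduces to a multiple of ${\rm Re}\int_0^{\xi}\langle\partial_t\phi,L\,\partial_t\phi\rangle\,{\rm d}t=0$ with no endpoint terms (using $\phi(\xi)=0$ and $\partial_t\phi(0)=0$), exactly as the analogous terms vanished in Theorem \ref{VP} and in the preceding theorem. For the source term one bounds $|\int_0^{\xi}\langle\rho,\phi\rangle_{\Gamma_1}\,{\rm d}t|\le\int_0^{\xi}\|\rho\|_{H^{-1/2}(\Gamma_1)}\|\phi\|_{H^{1/2}(\Gamma_1)}\,{\rm d}t$, and since $\|\phi(t)\|_{H^{1/2}(\Gamma_1)}\lesssim\|\phi(t)\|_{H^1_{\rm p}(\Omega)}\le\int_t^{\xi}\|U(\tau)\|_{H^1_{\rm p}(\Omega)}\,{\rm d}\tau\le T\,\|U\|_{L^\infty(0,T;H^1_{\rm p}(\Omega))}$ by Lemma \ref{TT}, this produces precisely the factor $T\|\rho\|_{L^1(0,T;H^{-1/2}(\Gamma_1))}$ that appears in \eqref{ES1}.

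The main obstacle is the transparent-boundary term $\sum_j\mu_j^{-1}\int_0^{\xi}\langle\mathscr T_j U,\phi\rangle_{\Gamma_j}\,{\rm d}t$. Unlike the energy identity behind \eqref{TPT}, where $\mathscr T_j$ is paired with $\partial_t U$ and Lemma \ref{TP} delivers the sign directly through the symbol $\bar s\beta_j^{(n)}$, here $\mathscr T_j U$ is paired with the undifferentiated, time-truncated $\phi$, and the cutoff at $t=\xi$ together with $\phi(0)\neq0$ prevents a naive passage to the Laplace side. I would handle this by the same Parseval device as in \eqref{PI}, applied to $\phi$ extended by zero, reducing the term to a frequency integral of $\mathscr B_j$ against the transformed $\phi$ and invoking the sign in Lemma \ref{TP}; the non-vanishing initial value $\phi(0)$ must then be absorbed, either by a regularization and limiting argument in $s_1\downarrow0$ or by re-expressing $\phi$ through $U$ so that the boundary-in-time contribution is controlled by the energy already present on the left-hand side. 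Establishing that this term does not destroy the sign is exactly the ``careful study of the TBC'' that the argument requires.

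Granting the sign of the boundary term, the identity yields $\|U(\xi)\|_{L^2(\Omega)}^2\lesssim T\|\rho\|_{L^1(0,T;H^{-1/2}(\Gamma_1))}\,(\|U\|_{L^\infty(0,T;L^2(\Omega))}+\|\nabla U\|_{L^\infty(0,T;L^2(\Omega))})$ after taking the supremum over $\xi$. To close the system and to capture $\|\nabla U\|_{L^\infty(0,T;L^2(\Omega))}$ together with the $\|\partial_t\rho\|_{L^1}$ contribution, I would run the companion energy estimate (test \eqref{TVP} with $\partial_t\bar U$, whose boundary term is nonpositive by \eqref{TPT}) and integrate its source by parts in time, using $U(0)=0$ and the elementary bound $\|U(t)\|_{L^2(\Omega)}\le\int_0^t\|\partial_\tau U\|_{L^2(\Omega)}\,{\rm d}\tau$ to trade one time derivative for a factor of $T$. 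Combining the two estimates gives a bootstrap inequality for $\|U\|_{L^\infty(0,T;L^2(\Omega))}+\|\nabla U\|_{L^\infty(0,T;L^2(\Omega))}$ which, once solved, is exactly \eqref{ES1}. Finally, \eqref{ES2} is immediate from \eqref{ES1} via $\|v\|_{L^2(0,T;L^2(\Omega))}\le T^{1/2}\|v\|_{L^\infty(0,T;L^2(\Omega))}$, which turns the right-hand side $T\|\rho\|_{L^1}+\|\partial_t\rho\|_{L^1}$ into $T^{3/2}\|\rho\|_{L^1}+T^{1/2}\|\partial_t\rho\|_{L^1}$.
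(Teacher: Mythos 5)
Your overall architecture matches the paper's: the antiderivative test function $\phi(t)=\int_t^\xi U\,{\rm d}\tau$, the cancellation of the $c_1$-terms by periodicity, a second energy identity to capture $\nabla U$ and the $\partial_t\rho$ contribution, and the passage from \eqref{ES1} to \eqref{ES2} (your one-line derivation of \eqref{ES2} from \eqref{ES1} is in fact slightly cleaner than the paper's, which re-integrates the pointwise inequality in $\xi$ and applies Cauchy--Schwarz and Young again). However, the proof is not complete: the step you yourself flag as ``the main obstacle'' --- the sign of $\sum_j\mu_j^{-1}\int_0^\xi\langle\mathscr T_jU,\phi\rangle_{\Gamma_j}\,{\rm d}t$ --- is precisely the step you leave unproved (``Granting the sign of the boundary term\ldots''), and it is the crux of the whole theorem. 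Your suspicion that a naive Parseval argument on $\phi$ extended by zero fails is correct, and for a concrete reason: distributionally $\partial_t\tilde\phi=-\tilde U+\phi(0)\delta_0$, so $\breve{\tilde\phi}=s^{-1}\bigl(\phi(0)-\breve{\tilde U}\bigr)$; besides the cross term with $\phi(0)$, the quadratic part of the frequency-side pairing carries the factor $s^{-1}$ on the conjugated slot and thus involves ${\rm Re}\bigl(s\beta_j^{(n)}\bigr)=s_1a_j-s_2b_j$, whereas Lemma \ref{TP} controls only the combination ${\rm Re}\bigl(\bar s\beta_j^{(n)}\bigr)=s_1a_j+s_2b_j$. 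The sign is genuinely lost, not merely obscured, so ``regularization and limiting in $s_1\downarrow0$'' cannot rescue this route.

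The missing idea is the elementary transposition identity \eqref{AP1}: for any $f\in L^2(0,\xi;L^2)$, $\int_0^\xi f(t)\,\bar\phi(t)\,{\rm d}t=\int_0^\xi\bigl(\int_0^tf(\tau)\,{\rm d}\tau\bigr)\bar U(t)\,{\rm d}t$. Applied with $f=\mathscr T_jU$, it moves the time antiderivative off the test function and onto the DtN term, so that the extension by zero and the Parseval identity are applied to $U$ itself (which does vanish at $t=0$), and the operator that appears is $\int_0^t\mathscr T_jU\,{\rm d}\tau=\mathscr L^{-1}\bigl(s^{-1}\mathscr B_j\breve{\tilde U}\bigr)$. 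The frequency-side quantity is then exactly ${\rm Re}\langle(s\mu_j)^{-1}\mathscr B_j\breve{\tilde U},\breve{\tilde U}\rangle_{\Gamma_j}$, which is what Lemma \ref{TP} was formulated for, and nonpositivity follows after letting $s_1\downarrow0$. Without this device your energy identity does not close. A secondary remark: for the companion estimate you test with $\partial_t U$ and integrate the source by parts, producing the endpoint term $\langle\rho(\xi),U(\xi)\rangle_{\Gamma_1}$; this is controllable by $\|\partial_t\rho\|_{L^1(0,T;H^{-1/2}(\Gamma_1))}$ only after invoking $\rho(0)=0$, which you should state. The paper instead applies the same $\psi$-construction to the time-differentiated problem with $\psi_2=\int_t^\xi\partial_tU\,{\rm d}\tau$, which avoids that endpoint term; either route works, but only once the boundary-term sign is secured as above.
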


\begin{proof}
Let $0 <\xi <T$ and define an auxiliary function
\[
\psi_1(x, z, t)=\int_t^\xi U(x, z, \tau) {\rm d} \tau, \quad(x, z) \in \Omega,
~0 \leq t \leq \xi.
\]
It is clear that
\begin{equation}\label{AP}
\psi_1 (x, z, \xi)=0,\quad\partial_t \psi_1 (x, z, t)=-U(x, z, t).
\end{equation}
For any $\phi (x, z, t) \in L^2(0, \xi;  L^2(\Omega)),$  we have
\begin{equation}\label{AP1}
\int_0^\xi \phi(x, z, t)\bar{\psi}_1 (x, z, t) {\rm d} t=\int_0^{\xi}\big(
\int_0^t \phi_1(x, z, \tau) d \tau\big)  \bar {U}(x, z, t) {\rm d} t.
\end{equation}
Indeed, using integration by parts and (\ref{AP}), we have
\begin{align*}
&\int_0^{\xi}
\phi(x, z, t)  \bar {\psi}_1 (x, z, t) {\rm d} t =\int_0^{\xi } \big( \phi(x, z,
t)  \int_t^{\xi} \bar U(x, z, \tau) {\rm d} \tau\big)  {\rm d} t\\
&=\int_0^{\xi} \int_t^{\xi} \bar U (x, z, \tau) {\rm d} \tau   {\rm
d}\big(\int_0^t \phi(x, z, \varsigma) {\rm d} \varsigma \big)\\
&=\int_t^{\xi} \bar U(x, z, \tau) {\rm d} \tau \int_0^t\phi(x, z, \varsigma)
{\rm d} \varsigma \mid_0^{\xi}+\int_0^{\xi}\big( \int_0^t \phi(x, z, \varsigma)
{\rm d} \varsigma \big)  \bar U(x, z, t) {\rm d} t\\
&=\int_0^{\xi} \big(\int_0^t \phi(x, z, \tau) {\rm d} \tau \big) \bar U(x, z, t)
{\rm d}  t.
\end{align*}
Next, we take the test function $w=\psi_1$ in (\ref{TVP}) and get
\begin{align}\label{TVP1}
\int_{\Omega}(\varepsilon-c_1^2\mu^{-1})\partial_t^2 U  \bar \psi_1
{\rm d} x {\rm  d}z
= -\int_{\Omega} \mu^{-1} \nabla U \cdot \nabla \bar \psi_1 {\rm d} x {\rm d}  z
+\sum\limits_{j=1}^2 \int_{\Gamma_j} \mu_j^{-1} \mathscr T_j U  \bar \psi_1
{\rm d} \gamma_j  \nonumber\\
+\int_{\Gamma_1} \rho  \bar \psi_1 {\rm d}  \gamma_1 -c_1\int_{\Omega}(\mu^{-1}
\partial_{tx} U+\partial_x (\mu^{-1} \partial_t U)) \bar \psi_1  {\rm d} x {\rm
d} z.
\end{align}
It follows from (\ref{AP})  and the initial conditions in (\ref{TRP}) that
\begin{align*}
{\rm Re} \int_0^{\xi} \int_\Omega (\varepsilon-c_1^2\mu^{-1})\partial_t^2 U 
\bar \psi_1 {\rm d} x {\rm d} z {\rm d} t
&={\rm Re} \int_{\Omega} \int_0^{\xi} \big(\partial_t (
(\varepsilon-c_1^2\mu^{-1})\partial_t U  \bar \psi_1)
+(\varepsilon-c_1^2\mu^{-1})\partial_t U  \bar U \big) {\rm d} t
{\rm d} x {\rm d} z \\
&={\rm Re } \int_{\Omega}\big( (\varepsilon-c_1^2\mu^{-1})\partial_t U  \bar
\psi_1)\mid_0^{\xi}+\frac{1}{2}(\varepsilon-c_1^2\mu^{-1})|U|^2
\mid_0^{\xi} \big){\rm d} x {\rm d} z  \\
&=\frac{1}{2}\|(\varepsilon -c_1^2\mu^{-1})^{1/2} U(\cdot,
\xi)\|^2_{L^2(\Omega)}.
\end{align*}
Integrating (\ref{TVP1}) from $t=0$ to $t= \xi$ and taking the real part yield
\begin{align}\label{EE1}
\frac{1}{2} &\| (\varepsilon -c_1^2\mu^{-1})^{1/2} U(\cdot,
\xi)\|^2_{L^2 (\Omega)}+{\rm Re}\int_0^{\xi} \int_{\Omega} \mu^{-1} \nabla U
\cdot \nabla \bar \psi_1 {\rm d} x {\rm d} z {\rm d} t\nonumber\\
=&\frac{1}{2}\| (\varepsilon -c_1^2\mu^{-1})^{1/2} U(\cdot,
\xi)\|^2_{L^2 (\Omega)}+\frac{1}{2}\int_{\Omega}\mu^{-1}|\int_0^{\xi} \nabla
U(\cdot, t) {\rm d}t|^2 {\rm d} x {\rm d} z \nonumber\\
=&{\rm Re} \int_0^{\xi} \sum\limits_{j=1}^2 \int_{\Gamma_j} \mu_j^{-1}
\mathscr T_j U  \bar\psi_1 {\rm d} \gamma_j {\rm d}t
+{\rm Re }\int_0^{\xi} \int_{\Gamma_1} \rho  \bar \psi_1 {\rm d} \gamma_1 {\rm d} t \nonumber \\
&-c_1 {\rm Re}\int_0^{\xi}  \int_{\Omega}(\mu^{-1} \partial_{tx} U+
\partial_x (\mu^{-1} \partial_t U)) \bar \psi_1 {\rm d} x {\rm d} z {\rm d} t.
\end{align}
In what follows, we estimate the three terms of the right-hand side of (\ref{EE1}) separately.

By the property (\ref{AP1}), we have
\begin{align*}
{\rm Re}\int_0^{\xi} \int_{\Gamma_j} \mu_j^{-1} \mathscr T_j U  \bar \psi_1 
{\rm d} \gamma_j {\rm d} t
={\rm Re} \int_0^{\xi} \int_0^{t}\big(\int_{\Gamma_j} \mu_j^{-1} \mathscr T_j
U(\cdot,\tau) {\rm d}\gamma_j \big) {\rm d}\tau  \bar U (\cdot, t) {\rm d}t.
\end{align*}
Let $\tilde U$ be the extension of $U$ with respect to $t$ in $\mathbb R$ such
that $\tilde U=0$ outside the interval $[0, \xi].$  We obtain from the Parseval
identity and  Lemma \ref {TP} that
\begin{align*}
{\rm Re}
 &\int_0^{\xi} e^{-2 s_1 t} \int_0^{t}\big(\int_{\Gamma_j} \mu_j^{-1} \mathscr
T_j U(\cdot,\tau) {\rm d}\gamma_j \big) {\rm d}\tau  \bar U (\cdot, t) {\rm d}
t\\
&= {\rm Re} \int_{\Gamma_j} \int_0^{\infty} e^{-2 s_1 t} \big( \int_0^t 
\mu_j^{-1} \mathscr T_j \tilde {U}(\cdot, \tau){\rm d\tau}\big)   \bar {\tilde
U}(\cdot, t) {\rm d} t d \gamma_j\\
&={\rm Re} \int_{\Gamma_j}\int_0^{\infty} e^{-2 s_1 t} \big(\int_0^t \mathscr
L^{-1} \circ \mu_j^{-1} \mathscr B_j \circ \mathscr L \tilde U(\cdot, \tau){\rm
d}\tau\big) \bar{\tilde {U}}(\cdot, t) {\rm d} \gamma_j  {\rm d} t\\
&={\rm Re} \int_{\Gamma_j}\int_0^{\infty} e^{-2 s_1 t} \big(\mathscr
L^{-1} \circ (s\mu_j)^{-1} \mathscr B_j \circ \mathscr L \tilde U(\cdot,
t)\big)\bar{\tilde {U}}(\cdot, t) {\rm d} \gamma_j  {\rm d} t\\
&=\frac{1}{2\pi}\int_{-\infty}^\infty {\rm Re}\langle (s\mu_j)^{-1}\mathscr
B_j\breve{ \tilde U}, \breve{\tilde U}\rangle_{\Gamma_j} {\rm d}s_2\leq 0,
\end{align*}
where we have used the fact that 
\[
\int_0^t u(\tau) {\rm d} \tau =\mathscr L^{-1}
(s^{-1} \breve u(s)).
\]
After taking $s_1 \rightarrow 0$, we obtain that
\begin{align}\label{I1}
{\rm Re} \int_0^{\xi} \sum\limits_{j=1}^2 \int_{\Gamma_j} \mu_j^{-1} \mathscr
T_j U  \bar \psi_1 {\rm d}\gamma_j {\rm d}t \leq 0.
\end{align}

For $0 \leq t \leq \xi \leq T,$  we have from (\ref{AP1}) that
\begin{align}\label{I2}
{\rm Re }\int_0^{\xi} \int_{\Gamma_1} \rho  \bar \psi_1  {\rm d}  \gamma_1 {\rm d} t
&=\int_0^{\xi} \big(  \int _0^{t} \int _{\Gamma_1} \rho (\tau) {\rm d} \gamma_1
{\rm d}  \tau\big)  \bar U  {\rm d} t \nonumber\\
&\leq \int_0^{\xi} \int_0 ^{t} \| \rho (\cdot, \tau)\|_{H^{-1/2}(\Gamma_1)} \|
U(\cdot, t)\|_{H^{1/2} (\Gamma_1)} {\rm d} \tau {\rm d}  t \notag\\
&\lesssim \int_0^{\xi} \int_0 ^{t} \| \rho (\cdot, \tau)\|_{H^{-1/2}(\Gamma_1)}
\| U (\cdot, t)\|_{H_{\rm p}^{1} (\Omega)} {\rm d} \tau {\rm d} t \notag\\
&\leq \big( \int_0^{\xi} \| \rho (\cdot, t)\|_{H^{-1/2}(\Gamma_1)}  {\rm d} t 
\big) \big(\int_0^{\xi}\|U(\cdot, t)\|_{H_{\rm p}^1 (\Omega)}  {\rm d} t \big).
\end{align}
Using integration by parts and ({\ref{AP}}), we have
\begin{align*}
\int_0^{\xi}
 &\int_\Omega \mu^{-1} \partial_t(\partial_x U)  \bar \psi_1 {\rm d}x {\rm d}z
{\rm d}t+ \int_0^{\xi} \int_{\Omega} \partial_x (\mu^{-1} \partial_t U)  \bar
\psi_1 {\rm d}x {\rm d}z {\rm d}t\\
&=\int_{\Omega} \big(\mu^{-1} \partial_x U 
\bar \psi_1\big) \mid_0^{\xi} {\rm d} x  {\rm d} z -
\int_0^{\xi} \mu^{-1} \partial_x U  \partial_t \bar \psi_1 {\rm d} t {\rm d} x {\rm d}z\\
&+\int_{\Omega} \partial_x (\mu^{-1} U) \cdot \bar \psi_1 \mid_0^{\xi} {\rm d} x {\rm d} z
- \int_0^{\xi} \partial_x(\mu^{-1} U) \cdot \partial_t \bar \psi_1 {\rm d} x {\rm d} z {\rm d}t\\
&=\int_0^{\xi} \int_{\Omega}  \big(\mu^{-1} \partial_x U +\partial_x (\mu^{-1} U)\big) \cdot \bar U {\rm d}x {\rm d}z {\rm d}t.
\end{align*}
By the periodicity of $\mu$ and $U$ in $x$, it yields that
\[
\int_0^{\xi} \int_{\Omega}  \big(\mu^{-1} \partial_x U +\partial_x (\mu^{-1} U)\big)  \bar U {\rm d} x {\rm d} z {\rm d}t
+\int_0^{\xi} \int_{\Omega}  \big(\mu^{-1} \partial_x  \bar U +\partial_x
(\mu^{-1} \bar U)\big) U {\rm d} x {\rm d} z {\rm d}t=0.
\]
Thus
\begin{equation}\label{I3}
{\rm Re}\int_0^{\xi}  \int_{\Omega}(\mu^{-1} \partial_{tx} U+
\partial_x (\mu^{-1} \partial_t U)) \bar \psi_1 {\rm d} x {\rm d} z {\rm d}t=0.
\end{equation}
Substituting (\ref{I1})--(\ref{I3}) into (\ref{EE1}), we have for any $\xi \in
[0, T]$ that
\begin{align}\label{EE2}
\frac{1}{2}
&\| (\varepsilon -c_1^2\mu^{-1})^{1/2} U(\cdot, \xi)\|^2_{L^2
(\Omega)}+\frac{1}{2}\int_{\Omega}\mu^{-1}|\int_0^{\xi} \nabla U(\cdot, t) {\rm
d}t|^2 {\rm d} x {\rm d} z  \nonumber\\
&\leq \big( \int_0^{\xi} \| \rho (\cdot, t)\|_{H^{-1/2}(\Gamma_1)}  {\rm d} t 
\big) \big(\int_0^{\xi}\|U(\cdot, t)\|_{H_{\rm p}^1 (\Omega)} \big) {\rm d} t.
\end{align}

Taking the derivative of (\ref{TRP}) with respect to $t$, we know that
$\partial_t U$ satisfies the same equation with $\rho$ replaced by $\partial_t
\rho$.  Define 
\[
\psi_2 (x, z, t) =\int_t ^{\xi} \partial_t U (x, z, \tau) {\rm d} \tau, \quad(x,
z ) \in \Omega, ~ 0 \leq t  \leq \xi.
\]
We may follow the same steps as those for $\psi_1$ to obtain
\begin{align}\label{EE3}
\frac{1}{2}
&\| (\varepsilon -c_1^2\mu^{-1})^{1/2} \partial_t U(\cdot,
\xi)\|^2_{L^2 (\Omega)}+\frac{1}{2}\int_{\Omega}\mu^{-1}|\int_0^{\xi} \partial_t
(\nabla U(\cdot, t)) {\rm d} t|^2  {\rm d} x {\rm d}z \nonumber\\
=&{\rm Re} \int_0^{\xi} \sum\limits_{j=1}^2 \int_{\Gamma_j} \mu_j^{-1} \mathscr
T_j \partial_t U  \bar \psi_2 {\rm d}\gamma_j {\rm  d} t
+{\rm Re }\int_0^{\xi} \int_{\Gamma_1} \partial_t \rho  \bar \psi_2 {\rm d} \gamma_1 {\rm d} t \nonumber \\
&-c_1 {\rm Re}\int_0^{\xi}  \int_{\Omega}(\mu^{-1} \partial_{ttx} U+
\partial_x (\mu^{-1} \partial^2_t U)) \bar \psi_2 {\rm d} x  {\rm d} z {\rm d} t.
\end{align}
Integrating by parts yields that
\begin{equation}\label{I4}
\frac{1}{2}\int_{\Omega}\mu^{-1}|\int_0^{\xi} \partial_t (\nabla U(\cdot, t))
{\rm d} t|^2 {\rm d} x  {\rm d} z=\frac{1}{2} \| \mu^{-1/2} \nabla U(\cdot,
\xi)\|^2_{L^2(\Omega)}.
\end{equation}
The first and the third terms on the right-hand side of (\ref{EE3}) are
discussed as above.  We only have to consider the second term. By (\ref{AP}),
Lemma \ref{TT}, and Lemma \ref{DTN}, we get
\begin{align}\label{I5}
\int_0^{\xi}\int_{\Gamma_1} \partial_t\rho  \bar \psi_2 {\rm d} \gamma_1 {\rm d} t
 &=\int_{0}^{\xi} \int_0^t (\int_{\Gamma_1} \partial_\tau \rho (\cdot, \tau){\rm
d} \gamma_1) {\rm d}\tau  \partial_t \bar U (\cdot, t) {\rm d} t \nonumber\\
&=\int_{\Gamma_1} \big(\int_0^{t} \partial_{\tau} \rho (\cdot, \tau) {\rm d}
\tau\big)\bar U (\cdot, t)\mid_0^{\xi} {\rm d} \gamma_1
-\int_0^ \xi \int_{\Gamma_1} \partial_t \rho (\cdot, t)  U(\cdot, t) {\rm d} \gamma_1 {\rm d} t \nonumber\\
& \lesssim \int_0^{\xi } \| \partial_t \rho (\cdot,
t)\|_{H^{-1/2}(\Gamma_1)}\| U(\cdot, t)\|_{H^{1/2}(\Gamma_1)}
{\rm d} t \nonumber\\
& \lesssim \int_0^{\xi }\| \partial_t \rho (\cdot, t)\|_{H^{-1/2}(\Gamma_1)}
\| U(\cdot, t)\|_{H_{\rm p}^{1}(\Omega)} {\rm d}t.
\end{align}
Substituting (\ref{I4}) and (\ref{I5})  into (\ref{EE3}), we have for any $\xi \in [0, T]$ that
\begin{align}\label{EE4}
\frac{1}{2}\|(\varepsilon -c_1^2\mu^{-1})^{1/2} \partial_t
U(\cdot, \xi)\|^2_{L^2 (\Omega)}
+\frac{1}{2} \| \mu^{-1/2} \nabla U(\cdot, \xi)\|^2_{L^2(\Omega)}
\nonumber \\
\lesssim \int_0^{\xi } \| \partial_t \rho (\cdot, t)\|_{H^{-1/2}(\Gamma_1)}
\|U(\cdot, t)\|_{H_{\rm p}^{1}(\Omega)} {\rm d}t.
\end{align}
Combing the estimates (\ref{EE2}) and (\ref{EE4}), we obtain 
\begin{align}\label{UH1}
\|U(\cdot, \xi)\|^2_{L^2(\Omega)}+\| \nabla U(\cdot, \xi)\|^2_{L^2(\Omega)^2}
\lesssim
 \big( \int_0^{\xi} \| \rho (\cdot, t)\|_{H^{-1/2}(\Gamma_1)} {\rm d }t  \big)
\big(\int_0^{\xi}\|U(\cdot, t)\|_{H_{\rm p}^1 (\Omega)} {\rm d }t\big)
\nonumber\\
+\int_0^{\xi } \| \partial_t \rho (\cdot,
t)\|_{H^{-1/2}(\Gamma_1)} \| U(\cdot, t)\|_{H_{\rm p}^{1}(\Omega)} {\rm d}t.
\end{align}
Taking the $L^{\infty}$- norm with respect to $\xi$ on both side of (\ref{UH1}) yields
\begin{align*}
\| U\|^2_{L^{\infty} (0, T;~ L^2(\Omega))}+\| \nabla U\|^2_{L^{\infty}(0,
T; L^2(\Omega)^2)}\lesssim T \| \rho\|_{L^1(0, T; H^{-1/2}
(\Gamma_1))}\|U\|_{L^{\infty}(0,T;  H^1_{\rm p}(\Omega))}\\
+\|\partial_t \rho \|_{L^1(0, T; H^{-1/2}
(\Gamma_1))}\|U\|_{L^{\infty}(0,T; H^1_{\rm p}(\Omega))},
\end{align*}
which gives the estimate (\ref{ES1}) after applying the Young inequality.

Integrating (\ref{UH1}) with respect to $\xi $ from $0$ to $T$ and using the
Cauchy--Schwarz inequality, we obtain
\begin{align*}
\|U\|^2_{L^2(0, T;~ L^2(\Omega))}+\| \nabla U\|^2_{L^2 (0, T;~ L^2 (\Omega)^2)}
\lesssim T^{3/2} \| \rho\|_{L^1(0, T; H^{-1/2} (\Gamma_1))} \| U\|_{L^2(0,
T; H^1_{\rm p}(\Omega)})\\
+T^{1/2} \| \partial_t \rho\|_{L^1(0, T; H^{-1/2} (\Gamma_1))}  \|
U\|_{L^2(0, T; H^1_{\rm p}(\Omega)}),
\end{align*}
which implies the estimate (\ref{ES2}) by using the Young inequality again.
\end{proof}

\section{Conclusion}\label{CL}

In this paper, we studied the time-domain scattering problem in a
one-dimensional grating. The TE and TM cases were considered in a unified
approach. The scattering problem was reduced equivalently into
an initial-boundary value problem in a bounded domain by using the exact
time-domain DtN map. The reduced problem was shown to have a unique solution by
using the energy method. The stability was also presented. The main
ingredients of the proofs were the Laplace transform, the Lax--Milgram
lemma, and the Parseval identity. Moreover, by directly considering the
variational problem of the time-domain wave equation, we obtained a priori
estimates with explicit dependence on time. In the future, we plan to
investigate the time-domain scattering by biperiodic structures where the full
three-dimensional Maxwell's equations should be considered. The progress will
be reported elsewhere.

\end{document}